\newtheorem{thm}{Theorem}[section]
\newtheorem{cor}[thm]{Corollary}
\newtheorem{lem}[thm]{Lemma}
\theoremstyle{definition}
\newtheorem{rem}[thm]{Remark}
\numberwithin{equation}{section}
\begin{document}

%%%%% To ease editing, for IMPAN journals add:

%\baselineskip=17pt

%%%%%%%%%%%%%%%%

\title{On rotational surfaces in pseudo--Euclidean space $\mathbb E^4_t$ with pointwise 1--type Gauss map}

\author{Burcu Bekta\c{s}, Elif \"{O}zkara Canfes and U\u{g}ur Dursun}

\date{}

\maketitle

%% Classification and key words; note that the 2010 classification is used:

\renewcommand{\thefootnote}{}

\footnote{2010 \emph{Mathematics Subject Classification}: Primary 53B25; Secondary 53C50.}

\footnote{\emph{Key words and phrases}: Pointwise 1--type Gauss map, rotational surfaces, 
parallel mean curvature vector,  normal bundle, zero mean curvature vector.}

\renewcommand{\thefootnote}{\arabic{footnote}}
\setcounter{footnote}{0}

%%%%%%%%

\begin{abstract}
In this work, we study some classes of rotational surfaces 
in the pseudo--Euclidean space $\mathbb E^4_t$ with profile curves 
lying in 2--dimensional planes. 
First, we determine all such surfaces in the Minkowski 4--space $\mathbb{E}^4_1$ 
with pointwise 1--type Gauss map of the first kind and second kind. 
Then, we obtain rotational surfaces in $\mathbb{E}^4_2$ with zero mean curvature 
and having pointwise 1--type Gauss map of second kind.
\end{abstract}

\section{Introduction}
In late 1970, B.-Y. Chen introduced the concept of finite type submanifolds of Euclidean space, \cite{CH1}. 
Since then many works have been done to characterize or classify submanifolds of Euclidean space 
or pseudo--Euclidean space in terms of finite type. 
Then the notion of finite type was extended to differentiable maps, in particular Gauss map of 
submanifolds by B.-Y. Chen and P. Piccinni, \cite{Chen-Piccinni}. 
A smooth map $\phi$ on a submanifold $M$ of a Euclidean space or a pseudo Euclidean space is 
said to be {\it finite type} if $\phi$ has a finite spectral resolution, that is, 
$\phi=\phi_0+\sum_{t=1}^k \phi_t$, where $\phi_0$ is a constant vector and $\phi_t$'s are 
non-constant maps such that $\Delta\phi_{t}=\lambda_{t}\phi_t,\; \lambda_t\in\mathbb{R}, \;t=1, 2, \cdots, k$. 

If a submanifold $M$ of a Euclidean space or a pseudo--Euclidean space has 1--type Gauss map $\nu$, 
then $\nu$ satisfies $\Delta\nu=\lambda(\nu+C)$ for some $\lambda\in\mathbb{R}$ and for some constant vector $C$.  
Also, it has been seen that the equation 
\begin{equation}
\label{DefinitonPW1type}
\Delta \nu =f(\nu +C)
\end{equation}
is satisfied for some smooth function $f$ on $M$ and some constant vector $C$ by the Gauss map of 
some submanifolds such as helicoid, catenoid, right cones in $\mathbb{E}^3$ and Enneper's 
hypersurfaces in $\mathbb E^{n+1}_1$, \cite{UDur2,Kim-Yoon}.
A submanifold of a Euclidean or a pseudo--Euclidean space is said to have pointwise 1--type Gauss map 
if it satisfies \eqref{DefinitonPW1type}.
A submanifold with pointwise 1--type Gauss map is said to be of {\it the first kind} 
if $C$ is the zero vector. Otherwise, it is said to be of {\it the second kind}. 
\begin{rem}
\label{planeGaussmap}
For an n--dimensional plane $M$ in a pseudo--Euclidean space, the Gauss map $\nu$ is constant
and $\Delta\nu=0$. For $f=0$ if we write $\Delta\nu=0.\nu$, then $M$ has pointwise 1--type 
Gauss map of the first kind. If we choose $C=-\nu$ for any nonzero smooth function $f$,
then \eqref{DefinitonPW1type} holds. In this case, $M$ has pointwise 1--type Gauss map
of the second kind. Therefore we say that an n--dimensional plane $M$ in a pseudo--Euclidean
space is a trivial pseudo--Riemannian submanifold with pointwise 1--type Gauss map of 
the first kind and the second kind.  
\end{rem}
The classification of ruled surfaces and rational surfaces in $\mathbb E^3_1$ with 
pointwise 1--type Gauss map were studied in \cite{Choi-Kim-YoonDW2011,KKKM}. 
Also, in \cite{UDur2} and \cite{Kim-Yoon-3}, a characterization of rotational hypersurface and a complete classification of cylindrical and non--cylindrical surfaces in $\mathbb{E}^m_1$ were obtained, respectively. 

The complete classification of Vranceanu rotational surfaces in the pseudo--Euclidean 
$\mathbb{E}^4_2$ with pointwise 1-type Gauss map was obtained in \cite{Kim-Yoon-2}, 
and it was proved that a flat rotational surface in $\mathbb{E}^4_2$ with pointwise 1--type Gauss map 
is either the product of two plane hyperbolas or the product of a plane circle and a plane hyperbola. 

Recently,  a classification of flat spacelike and 
timelike rotational surfaces in $\mathbb{E}^4_1$ with pointwise 1--type Gauss map were given 
\cite{Dursun-BB1, Dursun-BB}.

In this article, we present some results on rotational surfaces 
in the pseudo--Euclidean space $\mathbb E^4_t$ with profile curves lying in 2--dimensional 
planes and having pointwise 1--type Gauss map.  
First, we give classification of all such surfaces in the Minkowski space $\mathbb{E}^4_1$ defined by 
\eqref{CiftDonelYuzeyMnfld}, called double rotational surface, with pointwise 1-type Gauss map 
of the first kind. 
Then, we show that there exists no a non-planar timelike double rotational surface in $\mathbb{E}^4_1$  with 
flat normal bundle and pointwise 1-type Gauss map of the second kind. Finally, 
we determine the rotational surfaces in the pseudo-Euclidean  $\mathbb{E}^4_2$ defined by \eqref{1DY} and \eqref{2DY} 
with zero mean curvature and pointwise 1-type Gauss map of the second kind.

\section{Preliminaries} 
\label{prelim}
Let $\mathbb E^m_t$ denote  $m$-dimensional pseudo--Euclidean space with the canonical
metric given by
$$
g=\sum_{i=1}^{m-t} dx_i^2-\sum_{j=m-t+1}^{m} dx_j^2,
$$
where $(x_1, x_2, \hdots, x_m)$  is a rectangular coordinate system in $\mathbb E^m_t$.

\noindent We put 
\begin{eqnarray}
\label{ykürehiper}
\mathbb{S}^{m-1}_t(x_0,r^2)=\left\{x\in\mathbb{E}_t^{m}\;|\; \langle x-x_0, x-x_0\rangle= r^{-2}\right\},\\
\mathbb{H}^{m-1}_{t-1}(x_0,-r^2)=\left\{x\in\mathbb{E}_{t}^{m}\;|\; \langle x-x_0, x-x_0\rangle=-r^{-2}\right\},
\end{eqnarray}
where $\langle , \rangle$ is the indefinite inner product associated to $g$. Then $\mathbb{S}^{m-1}_t(x_0,r^2)$ and $\mathbb{H}^{m-1}_{t-1}(x_0,-r^2)$ are complete pseudo--Riemannian manifolds of constant curvature $r^2$ and $-r^2$, respectively. We denote $\mathbb{S}^{m-1}_t(x_0,r^2)$ and $\mathbb{H}^{m-1}_{t-1}(x_0,-r^2)$ by $\mathbb{S}^{m-1}_t(r^2)$ and $\mathbb{H}^{m-1}_{t-1}(-r^2)$ when $x_0$ is the origin.
In particular, $\mathbb{E}^m_1$, $\mathbb{S}^{m-1}_1(x_0,r^2)$ and $\mathbb{H}^{m-1}_1(x_0,-r^2)$ are known as \textit{the Minkowski}, \textit{de Sitter}, and \textit{anti-de Sitter spaces}, respectively.

A vector $v \in \mathbb{E}^m_t$ is called spacelike (resp., timelike)
if $\langle v, v\rangle>0$ or $v=0$ (resp., $\langle v,v\rangle<0$). 
A vector $v$ is called lightlike if $\langle v, v\rangle=0$, and $v\neq 0$.

Let $M$ be an oriented $n$--dimensional pseudo--Riemannian submanifold in an $m$--dimensional
pseudo--Euclidean space $\mathbb E^m_t$. 
We choose an oriented local orthonormal  frame $\{e_1, \dots, $ $ e_{m} \}$
on $M$ with $\varepsilon_A= \langle e_A, e_A \rangle = \pm 1$ such that  $e_1,\dots,e_{n}$ are
tangent to $M$ and $e_{n+1}, \dots, e_{m}$ are normal to $M$.
We use the following convention on the range of indices:
$1\leq i,j,k,\ldots \leq n, \; n+1\leq r,s, t,\ldots \leq m$.

Let $\widetilde{\nabla}$ be the Levi--Civita connection of $\mathbb E^m_t$
and  $\nabla$  the induced connection on $M$. Denote by $\{\omega^1,\dots,\omega^{m} \}\, $
the dual frame  and by  $\{\omega_{AB}\},  A,B=1,\dots, m$, the
connection forms associated to $\{e_1,\dots,e_{m} \}$. Then we have
%%%%%%%%%%%
\begin{align}
\widetilde{\nabla}_{e_k}e_i= &\sum_{j=1}^{n}\varepsilon_j \omega_{ij}(e_k) e_j + \sum_{r=n+1}^{m} \varepsilon_r h^r_{ik} e_r, \notag\\
\widetilde{\nabla}_{e_k}e_r= & - A_r(e_k)+ \sum_{s=n+1}^{m} \varepsilon_s \omega_{rs}(e_k) e_s,\;\; 
D_{e_k}e_r= \sum_{s=n+1}^{m} \varepsilon_s \omega_{rs}(e_k) e_s, \notag
\end{align}
where $D$ is the normal connection, $h^r_{ij}$ the coefficients of
the second fundamental form $h$, and
$A_r$ the Weingarten map in the direction $e_r$.

The mean curvature vector $H$ and the squared length $\| h\|^2$ of
the second fundamental form $h$ are defined, respectively, by
\begin{equation} 
\label{meancurvature}
H= \frac{1}{n}\sum_{i,r} \varepsilon_i\varepsilon_r h^r_{ii} e_r
\end{equation}
and
\begin{equation} 
\label{sqr-length}
\| h\|^2 = \sum_{i,j,r} \varepsilon_i\varepsilon_j \varepsilon_r  h^r_{ij}h^r_{ji}.
\end{equation}
A submanifold $M$ is said to have parallel mean curvature vector $H$
if $DH =0$ identically.

The gradient of a smooth function $f$ on $M$ is defined by 
$\nabla f=\sum\limits_{i=1}^n  \varepsilon_i e_i(f)e_i,$
and the Laplace operator acting on $M$ is 
$\Delta =\sum\limits_{i=1}^n\varepsilon_i(\nabla_{e_i}e_i-e_ie_i).$

The Codazzi equation of $M$ in $\mathbb{E}^m_t$ is given by
\begin{align} 
\label{codazzi}
\begin{split}
&h^r_{ij,k}= h^r_{jk,i},\\
&h^r_{jk,i} = e_i(h^r_{jk}) -
\sum_{\ell=1}^{n} \varepsilon_{\ell}\left (  h^r_{\ell k} \omega_{j\ell}(e_i) +
h^r_{\ell j} \omega_{k \ell}(e_i) \right ) + \sum_{s=n+1}^{m} \varepsilon_s h^s_{jk} \omega_{sr} (e_i).
\end{split}
\end{align}
Also, from the Ricci equation of $M$ in $\mathbb{E}^{m}_t$, we have
\begin{equation} 
\label{normal-curv}
R^D(e_j, e_k; e_r, e_s) = \langle [A_r,A_s](e_j), e_k \rangle =
\sum_{i=1}^{n} \varepsilon_i \left (h^r_{ik} h^s_{ij} - h^r_{ij} h^s_{ik}\right),
\end{equation}
where $R^D$ is the normal curvature tensor.

A submanifold $M$ in $\mathbb E^m_t$ is said to have flat normal bundle 
if $R^D$ vanishes identically.

Let $G(m-n, m)$ be the Grassmannian manifold consisting of
all oriented $(m-n)$--planes through the origin of an $m$--dimensional pseudo--Euclidean space  
$\mathbb E^m_t$ with index $t$
and $\bigwedge^{m-n} \mathbb E^m_t$  the vector space obtained by
the exterior product of  $m-n$ vectors in $\mathbb E^m_t$. 
Let  $f_{i_1} \wedge \cdots \wedge f_{i_{m-n}}$ and $g_{i_1}
\wedge \cdots \wedge g_{i_{m-n}}$ be two vectors in $\bigwedge^{m-n} \mathbb E^m_t$,
where  $\{f_1, f_2, \dots, f_m\}$ and $\{g_1, g_2, \dots, g_m\}$
are two orthonormal bases of $\mathbb E^m_t$.
Define an indefinite inner product $\langle\langle,  \rangle\rangle$ on
$\bigwedge^{m-n} \mathbb E^m_t$ by
\begin{equation} 
\label{inner-prod}
\langle\langle  f_{i_1} \wedge \cdots \wedge f_{i_{m-n}},  g_{i_1} \wedge \cdots \wedge g_{i_{m-n}} \rangle\rangle
= \det( \left\langle f_{i_\ell}, g_{j_k}\right\rangle).
\end{equation}
Therefore, for some positive integer $s$, we may identify $\bigwedge^{m-n} \mathbb E^m_t$ with
some pseudo--Euclidean space $\mathbb E^N_s$,
where $N= \binom{m}{m-n}$. 
The map $\nu : M \rightarrow G(m-n, m) \subset   \mathbb E^N_s$
from an oriented pseudo-Riemannian submanifold $M$ into
$G(m-n, m)$ defined by
\begin{equation}\label{MinkGaussTasvTanim}
\begin{array}{rcl}
\nu(p) = (e_{n+1} \wedge e_{n+2} \wedge \cdots \wedge e_{m}) (p)
\end{array}
\end{equation}
is called the {\it Gauss map} of $M$ which assigns to a point $p$ in $M$  
the oriented $(m-n)$--plane through the origin of $\mathbb E^m_t$ and parallel
to the normal space of $M$ at $p$, \cite{Kim-Yoon-2}.

We put $\varepsilon = \langle\langle \nu, \nu \rangle\rangle =
\varepsilon_{n+1} \varepsilon_{n+2} \cdots \varepsilon_m = \pm 1$ and
\begin{equation}
\label{l-like-n2-cmc}
\widetilde M^{N-1}_s (\varepsilon) =\left\{ \displaystyle
 \begin{array}{lll}
\displaystyle \mathbb S^{N-1}_s (1) \;\;& \mbox{in} \; \;\mathbb E^N_s & \mbox{if} \; \; \varepsilon =1
  \\ \notag
%%%%%%%%%%%%%
\displaystyle \mathbb H^{N-1}_{s-1} (-1) \;\;& \mbox{in} \;
 \; \mathbb E^N_s & \mbox{if} \; \; \varepsilon=-1.
\end{array}
\right.
\end{equation}
Then the Gauss image $\nu(M)$ can be viewed as
$\nu(M) \subset \widetilde M^{N-1}_s (\varepsilon)$.

\subsection{Rotational surfaces in $\mathbb{E}^4_1$ with profile curves lying in 2-planes}
We consider timelike rotational surfaces in the Minkowski space $\mathbb{E}^4_1$ 
whose profile curves lie in timelike 2--planes. By choosing a profile curve 
$\gamma(s)=(x(s),0,0,w(s))$ in the $xw$--plane defined on an open interval $I$ in $\mathbb{R}$. 
We can parametrize a timelike rotational surface in $\mathbb{E}^4_1$ as follows
\begin{equation}\label{CiftDonelYuzeyMnfld}
M:r(s,t)=(x(s)\cos at, x(s)\sin at, w(s)\sinh bt, w(s)\cosh bt),
\end{equation}
where $s$ is the arc lenght parameter of $\gamma$, $s\in\mathbb{R}$ and 
$t \in (0, 2\pi)$. The rotational surface $M$ is called a double rotational surface in $\mathbb{E}^4_1$. 
Then, ${x'}^2(s) - {w'}^2(s) =-1$ and the curvature
$\kappa$ of $\gamma$ is given by $\kappa(s)=w'(s)x''(s)-x'(s)w''(s)$.

We form the following orthonormal moving frame field $\{e_1, e_2, e_3, e_4 \}$
on $M$ such that $e_1, e_2$ are tangent to $M$, and  $e_3, e_4 $ are normal to $M$:
\begin{eqnarray}
\label{TegNormVektorsCiftDY1} e_1&=&\frac{\partial}{\partial s}, \quad
e_2 =\frac{1}{q}\frac{\partial}{\partial t},\\
\label{TegNormVektorsCiftDY3} e_3&=&(w'(s)\cos at, w'(s)\sin at, x'(s)\sinh bt, x'(s)\cosh bt),\\
\label{TegNormVektorsCiftDY4} e_4&=&\frac{1}{q}(bw(s)\sin at, -bw(s)\cos at, ax(s)\cosh bt, ax(s)\sinh bt),
\end{eqnarray}
where $q= \sqrt{a^2 x^2(s) + b^2 w^2(s)}$
and $\varepsilon_1 = -1, \; \varepsilon_2 = \varepsilon_3 = \varepsilon_4 = 1$.

By a direct computation, we have the components of the second fundamental form and 
the connection forms as follows
\begin{align}
\label{CDYuzeyTemelForm3}
h^3_{11}&=\kappa(s),    &h^3_{22}=-\frac{a^2x(s)w'(s) + b^2w(s)x'(s)}{a^2x^2(s)+b^2w^2(s)},  \\
\label{CDYuzeyTemelForm4}
h^3_{12}&=h^4_{11}= h^4_{22}=0,  &h^4_{12}=\frac{ab(x(s)w'(s)-w(s)x'(s))}{a^2x^2(s)+b^2w^2(s)}, \\
\label{CDYuzeyTegetKonForm}
\omega_{12}(e_1)&=0,  &\omega_{12}(e_2)=\frac{a^2x(s)x'(s)+b^2w(s)w'(s)}{a^2x^2(s)+b^2w^2(s)}, \\
\label{CDYuzeyNormlKonForm}
\omega_{34}(e_1)&=0,  &\omega_{34}(e_2)=\frac{ab(x(s)x'(s)-w(s)w'(s))}{a^2x^2(s)+b^2w^2(s)}.
\end{align}

Hence we obtain the mean curvature vector and the normal curvature of $M$ from $\eqref{meancurvature}$ 
and $\eqref{normal-curv}$, respectively, as 
\begin{eqnarray}
\label{CiftDonYuzOrtEgrVek} H&=&\frac 12(h^3_{22}- h^3_{11})e_3,\\
\label{CiftDonYuzNormEgrilik} R^D(e_1,e_2;e_3,e_4)&=&h^4_{12}(h^3_{11}+h^3_{22}).
\end{eqnarray}

On the other hand, from the Codazzi equation \eqref{codazzi} we have
\begin{eqnarray}
\label{CDYUzCodazzi1} e_1(h^3_{22})&=&-\omega_{12}(e_2)\left(h^3_{11}
+ h^3_{22} \right) - h^4_{12}\omega_{34}(e_2),\\
\label{CDYUzCodazzi2}e_1(h^4_{12})&=&-2 h^4_{12} \omega_{12}(e_2)
 +  h^3_{11}\omega_{34}(e_2).
\end{eqnarray}  
  
%%%%%%%%%%%%%%%%%%%%%%%%%%%%%%%%%%%%%%%%%%%%%%%

\subsection{Rotational surfaces in $\mathbb{E}^4_2$ with profile curves lying in 2--planes}
In the pseudo-Euclidean space $\mathbb{E}^4_2$, we consider two rotational surfaces 
whose profile curves lie in 2--planes.

First, we choose a profile curve $\alpha$ in the $yw$--plane as 
$\alpha(s)=(0, y(s), 0, w(s))$ defined on an open interval $I\subset\mathbb{R}$.
Then the parametrization of the rotational surface $M_1(b)$ in $\mathbb{E}^4_2$ is given by
\begin{equation}\label{1DY}
M_1(b): r_1(s,t)=(w(s)\sinh t, y(s)\cosh (bt), y(s)\sinh (bt), w(s)\cosh t), 
\end{equation}
for some constant $b>0$, where $s\in I$ and $t\in\mathbb{R}$.

Secondly, we choose a profile curve $\beta$ in the $xz$--plane as 
$\beta(s)=(x(s),0,z(s),0)$ defined on an open interval $I\subset\mathbb{R}$. 
Then the parametrization of the rotational surface $M_2(b)$ in $\mathbb{E}^4_2$ is given by
\begin{equation}\label{2DY}
M_2(b): r_2(s,t)=(x(s)\cos t, x(s)\sin t, z(s)\cos (bt), z(s)\sin (bt)), 
\end{equation}
for some constant $b>0$, where $s\in I$ and $t\in (0, 2\pi)$.

Now, for the rotational surface $M_1(b)$ defined by \eqref{1DY}, we consider the 
following orthonormal moving frame field $\{e_1, e_2, e_3, e_4\}$ on $M_1(b)$ 
such that $e_1, e_2$ are tangent to $M_1(b)$, and $e_3, e_4$ are normal to $M_1(b)$:
\begin{eqnarray}
\label{TegNormVektors1DY} e_1&=&\frac{1}{q}\frac{\partial}{\partial t}, \quad
e_2 =\frac{1}{A}\frac{\partial}{\partial s},\\
\label{TegNormVektors1DY3} e_3&=&\frac{1}{A}(y'(s)\sinh t, w'(s)\cosh (bt), w'(s)\sinh (bt), y'(s)\cosh t),\\
\label{TegNormVektors1DY4} e_4&=&-\frac{\varepsilon\varepsilon^*}{q}(by(s)\cosh t, w(s)\sinh (bt), w(s)\cosh (bt), by(s)\sinh t),
\end{eqnarray}
where $A=\sqrt{\varepsilon({y^\prime}^2(s)-{w^\prime}^2(s))}\neq 0$, $q= \sqrt{\varepsilon^*(w^2(s)- b^2 y^2(s))}\neq 0$, and 
$\varepsilon=\mbox{sgn}{({y^\prime}^2(s)-{w^\prime}^2(s))}$, $\varepsilon^*=\mbox{sgn}(w^2(s)- b^2 y^2(s))$.
Then, $\varepsilon_1 = -\varepsilon_4=\varepsilon^* , \; \varepsilon_2 = -\varepsilon_3 =\varepsilon$.

By a direct calculation, we have the components of the second fundamental form 
and the connection forms as follows
\begin{align}
\label{1DYuzeyTemelForm3}
h^3_{11}&=\frac{1}{Aq^2}(b^2y(s)w'(s)-w(s)y'(s)),\quad h^3_{22}=\frac{1}{A^3}(w'(s)y''(s)-y'(s)w''(s)),   \\
\label{1DYuzeyTemelForm4}
h^4_{12}&=\frac{\varepsilon\varepsilon^* b}{Aq^2}(w(s)y'(s)-y(s)w'(s)), \quad h^3_{12}=h^4_{11}=h^4_{22}=0, \\
\label{1DYuzeyTegetKonForm}
\omega_{12}(e_1)&= \frac{1}{Aq^2}(b^2y(s)y'(s)-w(s)w'(s)), \quad \omega_{12}(e_2)=0, \\
\label{1DYuzeyNormlKonForm}
\omega_{34}(e_1)&= \frac{\varepsilon\varepsilon^* b}{Aq^2}(w(s)w'(s)-y(s)y'(s)),\quad \omega_{34}(e_2)=0.
\end{align}

Similarly, for the rotational surface $M_2(b)$ defined by \eqref{2DY},
we consider the following orthonormal moving frame field $\{e_1, e_2, e_3, e_4\}$ on $M_2(b)$ 
such that $e_1, e_2$ are tangent to $M_2(b)$, and $e_3, e_4$ are normal to $M_2(b)$:
\begin{eqnarray}
\label{TegNormVektors2DY} e_1&=&\frac{1}{\bar{q}}\frac{\partial}{\partial t}, \quad
e_2 =\frac{1}{\bar{A}}\frac{\partial}{\partial s},\\
\label{TegNormVektors2DY3} e_3&=&\frac{1}{\bar{A}}(z'(s)\cos t, z'(s)\sin t, x'(s)\cos (bt), x'(s)\sin (bt)),\\
\label{TegNormVektors2DY4} e_4&=&-\frac{\varepsilon\varepsilon^*}{\bar{q}}(bz(s)\sin t, -bz(s)\cos t, x(s)\sin (bt), -x(s)\cos (bt)),
\end{eqnarray}
where $\bar{A}=\sqrt{\varepsilon({x^\prime}^2(s)-{z^\prime}^2(s))}\neq 0$, 
$\bar{q}= \sqrt{\varepsilon^*(x^2(s)- b^2 z^2(s))}\neq 0$,   
$\varepsilon=\mbox{sgn}{({x^\prime}^2(s)-{z^\prime}^2(s))}$, 
and  $\varepsilon^*=\mbox{sgn}(x^2(s)- b^2 z^2(s))$.
Then, $\varepsilon_1 = -\varepsilon_4=\varepsilon^* , \; \varepsilon_2 = -\varepsilon_3 =\varepsilon$.

By a direct computation, we have the components of the second fundamental form 
and the connection forms as follows
\begin{align}
\label{2DYuzeyTemelForm3}
h^3_{11}&=\frac{1}{\bar{A}{\bar{q}}^2}(b^2z(s)x'(s)-x(s)z'(s)),\quad h^3_{22}=\frac{1}{{\bar{A}}^3}(z'(s)x''(s)-x'(s)z''(s)),   \\
\label{2DYuzeyTemelForm4}
h^4_{12}&=\frac{\varepsilon\varepsilon^* b}{\bar{A}{\bar{q}}^2}(z(s)x'(s)-x(s)z'(s)), \quad h^3_{12}=h^4_{11}=h^4_{22}=0, \\
\label{2DYuzeyTegetKonForm}
\omega_{12}(e_1)&= \frac{1}{\bar{A}{\bar{q}}^2}(b^2z(s)z'(s)-x(s)x'(s)), \quad \omega_{12}(e_2)=0, \\
\label{2DYuzeyNormlKonForm}
\omega_{34}(e_1)&= \frac{\varepsilon\varepsilon^* b}{\bar{A}{\bar{q}}^2}(z(s)z'(s)-x(s)x'(s)),\quad \omega_{34}(e_2)=0.
\end{align}

Therefore, we have the mean curvature vector and normal curvature 
for the rotational surfaces $M_1(b)$ and $M_2(b)$ as follows
\begin{eqnarray}
\label{12DYOrtEgrVek} H&=&-\frac 12(\varepsilon\varepsilon^* h^3_{11} + h^3_{22})e_3,\\
%\label{12DYGaussEgr}  K&=& \varepsilon^*(h_{12}^4)^2-\varepsilon h_{11}^3 h_{22}^3,\\
\label{12DYNormEgrilik} R^D(e_1,e_2;e_3,e_4)&=&h^4_{12}(\varepsilon h^3_{22}-\varepsilon^* h^3_{11}).
\end{eqnarray}

On the other hand, by using the Codazzi equation \eqref{codazzi} we obtain
\begin{eqnarray}
\label{12DYUzCodazzi1} e_2(h^3_{11})&=&\varepsilon^* h_{12}^4\omega_{34}(e_1)+\omega_{12}(e_1)(\varepsilon^* h_{11}^3-\varepsilon h_{22}^3),\\
\label{12DYUzCodazzi2}e_2(h^4_{12})&=&-\varepsilon h_{22}^3\omega_{34}(e_1) + 2 \varepsilon^*h_{12}^4\omega_{12}(e_1).
\end{eqnarray}

The rotational surfaces $M_1(b)$ and $M_2(b)$ defined by \eqref{1DY} and \eqref{2DY} 
for $b=1$, $x(s)=y(s)=f(s)\sinh s$ and $z(s)=w(s)=f(s)\cosh s$ are also known 
as Vranceanu rotational surface, where $f(s)$ is a smooth function, \cite{HuiLiGuiLi}.

\section{Rotational surfaces in $\mathbb{E}^4_1$ with pointwise 1--type Gauss map} 
\label{sectionPW-1-kind}

In this section, we study rotational surfaces in the Minkowski 
space $\mathbb E^4_1$ defined by \eqref{CiftDonelYuzeyMnfld} with pointwise 1--type Gauss map.

%In \cite{Dursun-TurgayMink1stType}, 
By a direct calculation, the Laplacian of the Gauss map $\nu$ 
for an $n$--dimensional submanifold $M$ in a pseudo--Euclidean space
$\mathbb E^{n+2}_t$ is obtained as follows:

\begin{lem}\label{TeoremDursunArsan1}
%\cite{Dursun-TurgayMink1stType}  
Let $M$ be an $n$--dimensional submanifold of a pseudo--Euclidean space $\mathbb{E}^{n+2}_t$.
Then, the Laplacian of the Gauss map $\nu=e_{n+1}\wedge e_{n+2}$ is given by

\begin{align} \label{laplace-GaussMap}
\begin{split}
\Delta\nu =&||h||^2\nu
           +2\sum\limits_{j<k} \varepsilon_j \varepsilon_k R^D (e_j,e_k;e_{n+1},e_{n+2}) e_j\wedge e_k \\
           &+ \nabla(\mathrm{tr} A_{n+1})\wedge  e_{n+2}  + e_{n+1}\wedge\nabla(\mathrm{tr} A_{n+2})  \\
           &+  n\sum\limits_{j=1}^n \varepsilon_j \omega_{(n+1)(n+2)}(e_j) H \wedge e_j,
\end{split}
\end{align}
where $||h||^2$ is the squared length of the second fundamental form, $R^D$
the normal curvature tensor, and $\nabla(\mathrm{tr} A_r)$ the gradient of $\mathrm{tr}A_r$.
\end{lem}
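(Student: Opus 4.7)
The plan is to compute $\Delta\nu$ by direct expansion, using the defining relation $\Delta\nu=\sum_{i}\varepsilon_i(\widetilde\nabla_{\nabla_{e_i}e_i}-\widetilde\nabla_{e_i}\widetilde\nabla_{e_i})\nu$ applied to $\nu=e_{n+1}\wedge e_{n+2}$, viewed as a map into $\bigwedge^{2}\mathbb E^{n+2}_t$ by extending the ambient Levi--Civita connection to wedge products via the Leibniz rule. First I would compute the one-step derivative
\begin{equation*}
\widetilde\nabla_{e_i}\nu=\widetilde\nabla_{e_i}e_{n+1}\wedge e_{n+2}+e_{n+1}\wedge\widetilde\nabla_{e_i}e_{n+2},
\end{equation*}
substituting the structural formula $\widetilde\nabla_{e_i}e_r=-A_r(e_i)+\sum_s\varepsilon_s\omega_{rs}(e_i)e_s$ from the preliminaries. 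The contributions from the normal connection terms involve $e_{n+2}\wedge e_{n+2}$ and $e_{n+1}\wedge e_{n+1}$, which vanish, so this collapses to $\widetilde\nabla_{e_i}\nu=-A_{n+1}(e_i)\wedge e_{n+2}-e_{n+1}\wedge A_{n+2}(e_i)$.

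Next I would differentiate this expression a second time, applying Leibniz once more and then splitting each $\widetilde\nabla_{e_i}A_r(e_i)$ into its tangential part (involving $\nabla_{e_i}A_r(e_i)$ and $\omega_{(n+1)(n+2)}(e_i)$-terms) and its normal part (involving $h^s_{ij}$ coefficients via the Gauss formula). After multiplying by $\varepsilon_i$ and summing, and subtracting the $\widetilde\nabla_{\nabla_{e_i}e_i}\nu$ correction, the resulting expression organizes itself into three groups sorted by the type of the output bivector: a pure normal-normal piece proportional to $\nu$, pure tangent-tangent pieces of the form $e_j\wedge e_k$, and mixed tangent-normal pieces of the form $e_j\wedge e_{n+2}$ or $e_{n+1}\wedge e_j$.

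Identifying each group then gives the four terms of the statement. The normal-normal coefficient collects $\sum_{i,j,r}\varepsilon_i\varepsilon_j\varepsilon_r(h^r_{ij})^{2}=\|h\|^{2}$ using \eqref{sqr-length}, producing $\|h\|^{2}\nu$. The tangent-tangent pieces reassemble into $\sum_{i}\varepsilon_i\bigl(A_{n+1}(e_i)\wedge A_{n+2}(e_i)-A_{n+2}(e_i)\wedge A_{n+1}(e_i)\bigr)$, which after rewriting with $j<k$ and applying the Ricci equation \eqref{normal-curv} yields the normal-curvature term $2\sum_{j<k}\varepsilon_j\varepsilon_k R^D(e_j,e_k;e_{n+1},e_{n+2})\,e_j\wedge e_k$. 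The mixed tangent-normal pieces require the Codazzi equation \eqref{codazzi}: one uses the symmetry $h^r_{ij,k}=h^r_{ik,j}$ to convert $\sum_{i}\varepsilon_i e_i(h^r_{ij})$-type sums into the $j$-components of $\nabla(\mathrm{tr}A_r)$, while the residual pieces carrying $\omega_{(n+1)(n+2)}$ factors recombine into $n\sum_{j}\varepsilon_j\omega_{(n+1)(n+2)}(e_j)H\wedge e_j$ via \eqref{meancurvature}.

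The principal obstacle is this last step: the mixed tangent-normal bookkeeping, where one must carefully apply Codazzi to trade index positions, cleanly separate the $\nabla(\mathrm{tr}A_r)$ gradient contribution from the $H\wedge e_j$ contribution, and verify that the signs $\varepsilon_i\varepsilon_j\varepsilon_r$ and the normal-connection-form terms from both $\widetilde\nabla_{e_i}A_r(e_i)$ and from the differentiation of the second normal vector combine into the single clean coefficient $n\,\omega_{(n+1)(n+2)}(e_j)$. Everything else is Leibniz, frame-field gymnastics, and direct substitution of the equations from the preliminaries.
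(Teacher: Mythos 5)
Your outline is correct and is exactly the ``direct calculation'' the paper alludes to: the paper states Lemma \ref{TeoremDursunArsan1} without proof, and your plan (Leibniz rule on $\nu=e_{n+1}\wedge e_{n+2}$, Weingarten and Gauss formulas for the two derivatives, then sorting the bivectors into normal--normal, tangent--tangent and mixed parts identified via \eqref{sqr-length}, the Ricci equation \eqref{normal-curv} and the Codazzi equation \eqref{codazzi}) is the standard computation that yields \eqref{laplace-GaussMap}. The bookkeeping you flag as the main obstacle does work out: tracing Codazzi over the differentiation index produces $\nabla(\mathrm{tr}A_r)$ together with the $\sum_s\varepsilon_s h^s_{ii}\omega_{sr}$ terms, and the latter combine with the cross terms from differentiating $e_{n+1}$ and $e_{n+2}$ into the single coefficient $n\,\omega_{(n+1)(n+2)}(e_j)$ via \eqref{meancurvature}.
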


Let $M$ be a surface in the pseudo--Euclidean space $\mathbb E^4_t$. We choose a local orthonormal frame field
$\{e_1,e_2,e_3,e_4\}$ on $M$ such that  $e_1, e_2$ are tangent to $M$, and
$e_3, e_4$ are normal to $M$.
Let $C$ be a vector field in  $\Lambda^2\mathbb E^4_t\equiv \mathbb E^6_s$. Since the set
$\{e_A\wedge e_B|1\leq A < B\leq 4\}$  is an orthonormal basis for  $\mathbb E^6_s$, the vector  $C$ can be expressed as
\begin{equation} 
\label{constantvectorC}
C= \sum_{1\leq A < B \leq 4}\varepsilon_A\varepsilon_B C_{AB}\, e_A \wedge e_B,
\end{equation}
where $C_{AB} = \left\langle C, e_A \wedge e_B \right\rangle$.

\begin{lem}
\label{CVektoruSabitLemmaMinkowski}
A vector  $C$ in $\Lambda^2\mathbb E^4_t\equiv \mathbb E^6_s$ written by \eqref{constantvectorC} is
constant if and only if the following equations are satisfied for $i=1, 2$
\begin{align}
\label{MinkCVekSbtGYKos12}
e_i\left(C_{12}\right)=&\varepsilon_3h_{i2}^3C_{13}+ \varepsilon_4h_{i2}^4C_{14} -\varepsilon_3 h_{i1}^3C_{23}- \varepsilon_4h_{i1}^4C_{24},\\
\label{MinkCVekSbtGYKos13}
e_i\left(C_{13}\right)=&-\varepsilon_2h_{i2}^3C_{12}+\varepsilon_4\omega_{34}(e_i)C_{14}+\varepsilon_2
\omega_{12}(e_i)C_{23}-\varepsilon_4h_{i1}^4C_{34},\\
\label{MinkCVekSbtGYKos14}
e_i\left(C_{14}\right)=&-\varepsilon_2h_{i2}^4C_{12}-\varepsilon_3\omega_{34}(e_i)C_{13}+
\varepsilon_2\omega_{12}(e_i)C_{24}+\varepsilon_3h_{i1}^3C_{34},\\
\label{MinkCVekSbtGYKos23}
e_i\left(C_{23}\right)=&\varepsilon_1 h_{i1}^3C_{12}-\varepsilon_1\omega_{12}(e_i)C_{13}+
\varepsilon_4\omega_{34}(e_i)C_{24}-\varepsilon_4h_{i2}^4C_{34},\\
\label{MinkCVekSbtGYKos24}
e_i\left(C_{24}\right)=&\varepsilon_1h_{i1}^4C_{12}-\varepsilon_1\omega_{12}(e_i)C_{14}-
\varepsilon_3\omega_{34}(e_i)C_{23}+\varepsilon_3h_{i2}^3C_{34},\\
\label{MinkCVekSbtGYKos34}
e_i\left(C_{34}\right)=&\varepsilon_1h_{i1}^4C_{13}-\varepsilon_1h_{i1}^3
C_{14}+\varepsilon_2h_{i2}^4C_{23}-\varepsilon_2h_{i2}^3C_{24}.
\end{align}
\end{lem}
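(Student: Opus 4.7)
The plan is to recast the constancy of $C$ as a first-order condition along $M$, then expand it componentwise using the Leibniz rule and the Gauss--Weingarten formulas recalled in Section~\ref{prelim}. Since $\mathbb{E}^4_t$ is flat and the extension $\widetilde{\nabla}$ of the Levi-Civita connection to $\Lambda^2 \mathbb{E}^4_t \cong \mathbb{E}^6_s$ is also flat, the vector $C$ is constant if and only if $\widetilde{\nabla}_{e_1} C = \widetilde{\nabla}_{e_2} C = 0$, because a tangent frame $\{e_1,e_2\}$ spans the only directions in which $C$ can be tested along the connected submanifold $M$.

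Next I would differentiate the expansion \eqref{constantvectorC} to obtain
\[
\widetilde{\nabla}_{e_i} C = \sum_{A<B} \varepsilon_A \varepsilon_B \Bigl( e_i(C_{AB})\, e_A \wedge e_B + C_{AB}\, \widetilde{\nabla}_{e_i}(e_A \wedge e_B) \Bigr),
\]
and use the Leibniz rule $\widetilde{\nabla}_{e_i}(e_A \wedge e_B) = (\widetilde{\nabla}_{e_i} e_A) \wedge e_B + e_A \wedge (\widetilde{\nabla}_{e_i} e_B)$. Each $\widetilde{\nabla}_{e_i} e_A$ is given explicitly by the Gauss and Weingarten formulas from the preliminaries: for tangent indices one reads off the components from $\omega_{12}(e_i)$ and $h^r_{iA}$, and for normal indices $r\in\{3,4\}$ one uses $\widetilde{\nabla}_{e_i} e_r = -\sum_j \varepsilon_j h^r_{ji} e_j + \varepsilon_{r'} \omega_{rr'}(e_i) e_{r'}$ (with $\{r,r'\}=\{3,4\}$). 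Substituting these into the Leibniz expansion produces a linear combination of the six basis 2-vectors $e_C \wedge e_D$, $C<D$, whose coefficients are explicit expressions in the $C_{AB}$'s, the $h^r_{ij}$'s, $\omega_{12}(e_i)$, and $\omega_{34}(e_i)$.

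Finally, I would collect the coefficient of each $e_C \wedge e_D$, factor the common sign $\varepsilon_C \varepsilon_D$, and set it to zero. The six scalar equations that result, after using $h^r_{ij}=h^r_{ji}$, should match exactly the displayed equations \eqref{MinkCVekSbtGYKos12}--\eqref{MinkCVekSbtGYKos34}; as a sanity check I verified by hand that the $e_1 \wedge e_2$ coefficient produces \eqref{MinkCVekSbtGYKos12}. The converse is immediate: if the six equations hold, every coefficient in the expansion of $\widetilde{\nabla}_{e_i} C$ vanishes, so $C$ is parallel along $M$, hence constant.

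The only real obstacle is bookkeeping: making sure the factors $\varepsilon_A \varepsilon_B$ in the expansion of $C$, the indefinite inner product on $\Lambda^2 \mathbb{E}^4_t$, and the $\varepsilon_j$ in $A_r(e_i)=\sum_j \varepsilon_j h^r_{ji} e_j$ combine consistently, and that antisymmetry of the wedge product is applied correctly when reindexing $e_B \wedge e_A = -e_A \wedge e_B$. No conceptual difficulty should remain beyond this careful accounting of signs.
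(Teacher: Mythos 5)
Your proposal is correct and is exactly the computation the paper relies on (the lemma is stated there without proof, as a routine consequence of the Gauss--Weingarten formulas): differentiating $C_{AB}=\langle\langle C, e_A\wedge e_B\rangle\rangle$, or equivalently collecting the $e_A\wedge e_B$--coefficients of $\widetilde\nabla_{e_i}C$, reproduces \eqref{MinkCVekSbtGYKos12}--\eqref{MinkCVekSbtGYKos34} with the stated signs, and I confirmed the $e_1\wedge e_2$ and $e_1\wedge e_3$ cases agree. The only point worth making explicit is that ``constant'' here means the derivative of $C$ vanishes in every direction tangent to $M$, which is precisely $\widetilde\nabla_{e_1}C=\widetilde\nabla_{e_2}C=0$, so your equivalence is complete.
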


%As in \cite{Dursun-TurgayMink1stType}, 
Using \eqref{laplace-GaussMap} the following results can be stated for the characterization of
timelike surfaces in $\mathbb{E}^4_1$ with pointwise 1--type Gauss map of the first kind.

\begin{thm}
\label{MinMinimal1Cesit1tip}
Let $M$ be an oriented timelike surface with zero mean curvature in $\mathbb E^4_1$. 
Then $M$ has pointwise 1--type Gauss map of the first kind if and only if 
$M$ has flat normal bundle. Hence the Gauss map $\nu$ satisfies
\eqref{DefinitonPW1type} for $f=\|h\|^2$ and $C=0$.
\end{thm}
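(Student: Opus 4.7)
The plan is to apply Lemma \ref{TeoremDursunArsan1} with $n = 2$, $m = 4$, $t = 1$, and exploit the vanishing of $H$ to collapse the right-hand side of \eqref{laplace-GaussMap}. First I would observe two consequences of $H \equiv 0$. The final sum $n\sum_{j}\varepsilon_j\omega_{(n+1)(n+2)}(e_j) H\wedge e_j$ obviously vanishes. Moreover, from \eqref{meancurvature} and the linear independence of $e_3, e_4$, the condition $H = 0$ forces $\mathrm{tr}\,A_r = \sum_i \varepsilon_i h^r_{ii} = 0$ for $r = 3, 4$, so that $\nabla(\mathrm{tr}\,A_r) = 0$ for each $r$; hence the two middle terms of \eqref{laplace-GaussMap} also disappear.

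After these reductions, and using the fact that for a surface the sum $\sum_{j<k}$ contains only the pair $(j,k) = (1,2)$, the Laplacian of the Gauss map simplifies to
\begin{equation*}
\Delta\nu = \|h\|^2\,\nu + 2\varepsilon_1\varepsilon_2\,R^D(e_1,e_2;e_3,e_4)\, e_1\wedge e_2.
\end{equation*}
Next I would use the linear independence of $\nu = e_3\wedge e_4$ and $e_1\wedge e_2$ in $\bigwedge^2 \mathbb E^4_1$, which is evident from \eqref{inner-prod}. If $M$ has pointwise 1--type Gauss map of the first kind, so $\Delta\nu = f\nu$ for some smooth $f$, then
\begin{equation*}
(f - \|h\|^2)\, e_3\wedge e_4 = 2\varepsilon_1\varepsilon_2\,R^D(e_1,e_2;e_3,e_4)\, e_1\wedge e_2,
\end{equation*}
and linear independence forces both sides to vanish, yielding simultaneously $f = \|h\|^2$ and $R^D(e_1,e_2;e_3,e_4) = 0$; since this is the only independent component of $R^D$ for a surface in $\mathbb E^4_1$, the normal bundle is flat. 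Conversely, a flat normal bundle kills the correction term directly, leaving $\Delta\nu = \|h\|^2\,\nu$, which is of the required form with $f = \|h\|^2$ and $C = 0$.

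The argument is essentially mechanical once Lemma \ref{TeoremDursunArsan1} is in hand; the only step requiring a small amount of care is the observation that $H = 0$ forces $\nabla(\mathrm{tr}\,A_3) = \nabla(\mathrm{tr}\,A_4) = 0$ in addition to the vanishing of the final term of \eqref{laplace-GaussMap}. Accordingly I do not foresee any substantive obstacle, and the timelike character of $M$ plays no role beyond ensuring the signs $\varepsilon_j$ are well defined.
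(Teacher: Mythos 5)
Your argument is correct and is exactly the route the paper intends: the theorem is stated there as an immediate consequence of the Laplacian formula \eqref{laplace-GaussMap}, with $H=0$ killing the last three terms (since $H=0$ forces $\mathrm{tr}A_3=\mathrm{tr}A_4=0$) and linear independence of $e_1\wedge e_2$ and $e_3\wedge e_4$ giving the equivalence with $R^D=0$. No gaps; your filled-in details match what the authors leave implicit.
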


\begin{thm}
\label{NonmaksClassTheo}
Let $M$ be an oriented timelike surface with nonzero mean curvature in $\mathbb E^4_1$. 
Then $M$ has pointwise 1--type Gauss map of the first kind if and only if $M$
has parallel mean curvature vector.
\end{thm}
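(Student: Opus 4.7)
The plan is to work directly from the Laplacian formula \eqref{laplace-GaussMap} for $\nu=e_3\wedge e_4$ and match coefficients in the orthonormal basis $\{e_A\wedge e_B:1\le A<B\le 4\}$ of $\Lambda^2\mathbb{E}^4_1$. Since the ambient signature is $(3,1)$ and $M$ is timelike (signature $(1,1)$), the normal bundle is positive definite, so $\varepsilon_3=\varepsilon_4=1$. Writing $H=H_3e_3+H_4e_4$, one has $\mathrm{tr}A_r=2H_r$, and a direct computation using $D_{e_i}e_r=\varepsilon_s\omega_{rs}(e_i)e_s$ gives
\[
D_{e_i}H=\bigl[e_i(H_3)-H_4\omega_{34}(e_i)\bigr]e_3+\bigl[e_i(H_4)+H_3\omega_{34}(e_i)\bigr]e_4.
\]

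For the forward direction, I would substitute $\Delta\nu=f\nu$ into \eqref{laplace-GaussMap} and decompose $\Delta\nu-fe_3\wedge e_4=0$ bivector by bivector. The $e_1\wedge e_2$ coefficient forces $R^D(e_1,e_2;e_3,e_4)=0$, while the four coefficients along $e_i\wedge e_3$ and $e_i\wedge e_4$ (for $i=1,2$) collapse, after substituting $\mathrm{tr}A_r=2H_r$, to precisely the two bracketed expressions above vanishing. This yields $D_{e_i}H=0$ for $i=1,2$, i.e.\ $DH\equiv 0$.

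For the converse, the key is to show that the parallelism hypothesis together with $H\neq 0$ already forces the ``extra'' terms in \eqref{laplace-GaussMap} to die. I would rotate the normal frame so that $e_3=H/\|H\|$ (possible since the spacelike normal bundle makes $H$ spacelike and non-null). In this frame, $H_3=\|H\|$ and $H_4=0$; substituting into $D_{e_i}H=0$ gives $e_i(\|H\|)=0$ and, since $\|H\|>0$, $\omega_{34}(e_i)\equiv 0$. Consequently $De_3=De_4=0$, so $R^D=0$, $\mathrm{tr}A_3=2\|H\|$ is constant, and $\mathrm{tr}A_4=0$. Every term on the right-hand side of \eqref{laplace-GaussMap} other than $\|h\|^2\nu$ then vanishes, giving $\Delta\nu=\|h\|^2\nu$; hence $M$ has pointwise $1$--type Gauss map of the first kind with $f=\|h\|^2$ and $C=0$.

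The bookkeeping in the coefficient match of the first step is the bulkiest computation, but it is mechanical. The genuinely delicate point is the converse direction: one must use the definiteness of the normal bundle of a timelike surface in $\mathbb{E}^4_1$ to justify the normalization $e_3=H/\|H\|$ globally (which is where the hypothesis ``nonzero mean curvature'' is used in an essential way) and to conclude $R^D=0$ from $DH=0$ alone, so that the Laplacian actually becomes a pure multiple of $\nu$ rather than acquiring an $e_1\wedge e_2$ term.
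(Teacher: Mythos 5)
Your argument is correct and follows exactly the route the paper intends: Theorem \ref{NonmaksClassTheo} is stated there without proof, with only the remark that it follows from \eqref{laplace-GaussMap}, and your coefficient matching in the orthonormal basis $\{e_A\wedge e_B\}$ of $\Lambda^2\mathbb{E}^4_1$, together with the normalization $e_3=H/\|H\|$ in the converse (legitimate since the normal bundle of a timelike surface in $\mathbb{E}^4_1$ is positive definite, so $H\neq 0$ is spacelike), is precisely that computation. Both directions check out, including the observation that $DH=0$ with $H\neq 0$ forces $\omega_{34}\equiv 0$ and hence kills the $R^D$ term as well.
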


We will classify timelike rotational surface in $\mathbb{E}^4_1$ defined by
\eqref{CiftDonelYuzeyMnfld} with pointwise 1--type Gauss
map of the first kind by using the above theorems.

\begin{thm}
\label{NorBundFlatMinimal}
Let $M$ be a timelike rotational surface in $\mathbb{E}^4_1$ defined by
\eqref{CiftDonelYuzeyMnfld}.
Then $M$ has zero mean curvature, and its normal bundle is flat if and only if $M$
is an open part of a timelike plane in $\mathbb E^4_1$.
\end{thm}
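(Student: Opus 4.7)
\medskip

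\noindent\emph{Proof plan.} The ``if'' direction is immediate: a timelike $2$--plane in $\mathbb{E}^4_1$ is totally geodesic, so all $h^r_{ij}$ vanish and consequently $H=0$ and $R^D=0$. For the ``only if'' direction, the plan is to translate both hypotheses into pointwise conditions on the profile curve $\gamma(s)=(x(s),0,0,w(s))$ using the second fundamental form coefficients \eqref{CDYuzeyTemelForm3}--\eqref{CDYuzeyTemelForm4}, and then to rule out all possibilities for $\gamma$ except the degenerate one whose rotation sweeps out a plane.

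Concretely, I would first use \eqref{CiftDonYuzOrtEgrVek} to note that $H=0$ is equivalent to $h^3_{11}=h^3_{22}$, i.e.\
\begin{equation*}
\kappa(s)\bigl(a^2x^2(s)+b^2w^2(s)\bigr) \;=\; -\bigl(a^2 x(s)w'(s)+b^2 w(s)x'(s)\bigr).
\end{equation*}
Under this identity, \eqref{CiftDonYuzNormEgrilik} simplifies to $R^D(e_1,e_2;e_3,e_4)=2\kappa(s)\,h^4_{12}$, and flatness of the normal bundle becomes the single product equation
\begin{equation*}
\kappa(s)\cdot ab\bigl(x(s)w'(s)-w(s)x'(s)\bigr)\;\equiv\;0.
\end{equation*}
Thus at each point either $\kappa$ vanishes or $xw'-wx'$ does.

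The main step is to show that $\kappa\equiv 0$ on all of $I$. Suppose for contradiction that $\kappa(s_0)\neq 0$. Then on an open neighborhood $J$ of $s_0$ one has $x(s)w'(s)-w(s)x'(s)=0$. On a sub-interval where $x\neq 0$ this gives $(w/x)'=0$, so $w=cx$ for some constant $c$; inserting this into the arc-length relation $x'^2-w'^2=-1$ forces $x'^2(1-c^2)=-1$, hence $c^2>1$ and $x'$ is a nonzero constant. Then both $x$ and $w$ are affine functions of $s$, which gives $\kappa=w'x''-x'w''\equiv 0$ on $J$, contradicting $\kappa(s_0)\neq 0$. (The case $x\equiv 0$ on a subinterval is handled similarly and immediately yields $\kappa=0$.) This contradiction forces $\kappa\equiv 0$, so $\gamma$ is a timelike straight line: $x(s)=\alpha s+\alpha_0$, $w(s)=\beta s+\beta_0$ with $\alpha^2-\beta^2=-1$.

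It then remains to feed this back into the $H=0$ identity, which reduces to
$(a^2+b^2)\alpha\beta\, s+(a^2\alpha_0\beta+b^2\beta_0\alpha)=0$ for all $s$. Since $a,b\neq 0$, this forces $\alpha\beta=0$ together with $a^2\alpha_0\beta+b^2\beta_0\alpha=0$. The option $\beta=0$ is excluded by $\alpha^2-\beta^2=-1$, so $\alpha=0$, $\beta=\pm 1$, and then $\alpha_0=0$. Substituting back into \eqref{CiftDonelYuzeyMnfld} gives $r(s,t)=(0,0,(\pm s+\beta_0)\sinh bt,(\pm s+\beta_0)\cosh bt)$, whose image lies in the timelike $2$--plane $\{(0,0,y_3,y_4)\}$, completing the proof. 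The only real obstacle is the case analysis showing that $\kappa$ cannot be nonzero on an open set; everything else is algebraic bookkeeping.
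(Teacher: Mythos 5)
Your proof is correct and follows essentially the same route as the paper's: both reduce the two hypotheses via \eqref{CiftDonYuzOrtEgrVek} and \eqref{CiftDonYuzNormEgrilik} to the product condition $\kappa\, h^4_{12}=0$, show the profile curve must be a straight line, and then feed the line back into the $H=0$ identity to force $x\equiv 0$, i.e.\ the timelike $zw$--plane. The only organizational difference is that you first prove $\kappa\equiv 0$ by a local argument on the open set where it would be nonzero, whereas the paper splits globally into the cases $\kappa=0$ and $h^4_{12}=0$; the content is the same.
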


\begin{proof}
Let $M$ be a timelike rotational surface in $\mathbb{E}^4_1$ given by 
\eqref{CiftDonelYuzeyMnfld}.
Then there exists a frame field $\{e_1, e_2, e_3, e_4\}$ defined on $M$ given by 
\eqref{TegNormVektorsCiftDY1}--\eqref{TegNormVektorsCiftDY4},
and the components of the second fundamental forms are given by \eqref{CDYuzeyTemelForm3} 
and \eqref{CDYuzeyTemelForm4}.
Since $M$ has zero mean curvature, and its normal bundle is flat,
then \eqref{CiftDonYuzOrtEgrVek} and \eqref{CiftDonYuzNormEgrilik} imply, respectively,
\begin{eqnarray}
\label{CiftDonYuzMinCond}h^3_{22}-\kappa&=&0,\\
\label{CiftDonYuzDuzNormCond} h^4_{12}(\kappa+h^3_{22})&=&0
\end{eqnarray}
as $h_{11}^3 = \kappa$, where $\kappa$ is the curvature of the profile curve of $M$.
By using \eqref{CiftDonYuzMinCond} and \eqref{CiftDonYuzDuzNormCond} 
we obtain $h^4_{12}\kappa=0$ which implies either $\kappa=0$ or $h^4_{12}=0$.

\noindent {\bf Case 1.} $\kappa=0$. Then the profile curve of $M$ is a line. 
We can parametrize the line as
\begin{eqnarray} \label{CDProfEgrBirDogru}
x(s)= x_0s+x_1, \quad w(s)= w_0s+w_1
\end{eqnarray}
for some constants $x_0,\ x_1,\ w_0,\ w_1\in\mathbb{R}$ with $x_0^2-w_0^2=-1$.
From \eqref{CiftDonYuzMinCond} we also have $h^3_{22}=0$.
By using the second equation in \eqref{CDYuzeyTemelForm3}
and \eqref{CDProfEgrBirDogru} we obtain
$$h^3_{22} = -\frac{ (a^2 + b^2)x_0w_0 s+ a^2x_1w_0 + b^2x_0w_1}{a^2(x_0s+x_1)^2+b^2(w_0s+w_1)^2}=0$$
which gives
\begin{eqnarray}
\label{CebDenkSisst1} (a^2 + b^2)x_0w_0=0,\\
\label{CebDenkSisst2} a^2x_1w_0 + b^2w_1x_0=0.
\end{eqnarray}
From \eqref{CebDenkSisst1} if $w_0=0$, then $x_0^2=-1$ which is inconsistent equation.
Hence, $w_0\neq 0$ and $x_0=0$, and thus $w_0=\pm 1$. Also, from \eqref{CebDenkSisst2}
we get $x_1 = 0$.
Thus, $x= 0$ which implies that $M$ is an open part of the timelike $zw$--plane.

\noindent \textbf{Case 2.} $h^4_{12}=0$. From the first equation 
in \eqref{CDYuzeyTemelForm4}
we have the differential equation $xw'-wx'=0$ that gives 
$x= c_0 w$ where $c_0$ is a constant.
Therefore, the profile curve of $M$ is an open part of a line passing through the origin.
Since the curvature $\kappa$ is zero, we have  $h_{11}^3 = 0$, and thus
$h_{22}^3 = 0$ because of \eqref{CiftDonYuzMinCond}.
From the second equation in \eqref{CDYuzeyTemelForm3} we get $c_0 (a^2+ b^2) w w' = 0$
which implies that $c_0 = 0$, i.e., $x=0$.
Therefore $M$ is an open part of the timelike $zw$--plane.\\
\indent In view of Remark \ref{planeGaussmap}, the converse of the proof is trivial.
\end{proof}

By Theorem \ref{MinMinimal1Cesit1tip} and  Theorem \ref{NorBundFlatMinimal}, we state

\begin{cor}
There exists no non--planar timelike surface with zero mean curvature in $\mathbb E^4_1$ defined by \eqref{CiftDonelYuzeyMnfld} with pointwise 1--type Gauss map of the first kind.
\end{cor}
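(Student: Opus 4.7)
The plan is to obtain this corollary as an immediate consequence of the two preceding results, Theorem \ref{MinMinimal1Cesit1tip} and Theorem \ref{NorBundFlatMinimal}, with no additional computation needed. The argument is by contradiction: I would assume there exists a non-planar timelike rotational surface $M$ in $\mathbb{E}^4_1$ of the form \eqref{CiftDonelYuzeyMnfld} with zero mean curvature and pointwise 1--type Gauss map of the first kind, and derive that $M$ must in fact be planar.

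First, since $M$ is a timelike surface in $\mathbb{E}^4_1$ with zero mean curvature carrying a pointwise 1--type Gauss map of the first kind, Theorem \ref{MinMinimal1Cesit1tip} applies and forces $M$ to have flat normal bundle, i.e., $R^D\equiv 0$. Second, $M$ is of the explicit form \eqref{CiftDonelYuzeyMnfld}, so the hypotheses of Theorem \ref{NorBundFlatMinimal} are now met: $M$ is a timelike rotational surface with zero mean curvature and flat normal bundle. That theorem then concludes that $M$ is an open part of a timelike plane in $\mathbb{E}^4_1$, contradicting the non--planarity assumption.

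There is essentially no obstacle here, since all the work is carried by the two theorems just cited; the corollary is really just their composition. The only thing to be slightly careful about is matching the frame-field/normal-bundle conventions: Theorem \ref{MinMinimal1Cesit1tip} characterizes flatness of $R^D$ in terms of the Gauss map via \eqref{laplace-GaussMap}, while Theorem \ref{NorBundFlatMinimal} uses $R^D$ in the form \eqref{CiftDonYuzNormEgrilik} with respect to the specific adapted frame \eqref{TegNormVektorsCiftDY1}--\eqref{TegNormVektorsCiftDY4}. Both refer to the same intrinsic normal curvature tensor, so no translation between conventions is needed, and no computation beyond invoking the two results is required.
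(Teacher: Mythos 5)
Your proposal is correct and follows exactly the route the paper intends: the corollary is stated immediately after Theorem \ref{NorBundFlatMinimal} with the words ``By Theorem \ref{MinMinimal1Cesit1tip} and Theorem \ref{NorBundFlatMinimal}, we state,'' i.e.\ zero mean curvature plus pointwise 1--type Gauss map of the first kind forces a flat normal bundle, and then the classification theorem forces $M$ to be an open part of a timelike plane. No further comment is needed.
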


Now, we focus on timelike rotational surfaces in $\mathbb{E}^4_1$ with
parallel nonzero mean curvature vector to obtain surfaces in $\mathbb{E}^4_1$ 
defined by \eqref{CiftDonelYuzeyMnfld} with
pointwise 1--type Gauss map of the first kind.

\begin{thm} 
\label{Rot-paralleH}
A timelike rotational surface in $\mathbb{E}^4_1$ defined by \eqref{CiftDonelYuzeyMnfld}
has parallel nonzero mean curvature vector if and only if it is an open part of the 
timelike surface defined by
\begin{align}
\label{CiftDonYuzOrtEgrParGYKosul}
\begin{split}
F(s,t)= (& r_0\cosh (\frac{s}{r_0})  \cos at, \;  r_0\cosh (\frac{s}{r_0})
 \sin at, \; r_0 \sinh (\frac{s}{r_0}) \sinh bt,  \\
 & r_0 \sinh(\frac{s}{r_0}) \cosh bt)
\end{split}
\end{align}
which has zero mean curvature in the de Sitter space 
$\mathbb{S}^3_1\left(\frac{1}{r_0^2}\right)\subset \mathbb{E}^4_1$.
\end{thm}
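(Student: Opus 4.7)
The plan is to exploit $H = \mu\, e_3$ with $\mu = \tfrac{1}{2}(h^3_{22}-\kappa)$ from \eqref{CiftDonYuzOrtEgrVek}, and analyze $DH = 0$ directly in the normal frame. Using $D_{e_k} e_3 = \omega_{34}(e_k)\, e_4$ one obtains
\[
D_{e_k} H = e_k(\mu)\, e_3 + \mu\, \omega_{34}(e_k)\, e_4,
\]
so parallelism of a nonzero $H$ forces $e_k(\mu) = 0$ and $\omega_{34}(e_k) = 0$ for $k = 1, 2$. Constancy of $\mu$ will follow automatically once the second family of equations is solved, so the crux is $\omega_{34}(e_1) = \omega_{34}(e_2) = 0$. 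By \eqref{CDYuzeyNormlKonForm}, the first is automatic, while the second reduces, under the natural assumption $a, b \neq 0$ for a genuine double rotation, to the profile-curve ODE $x(s) x'(s) = w(s) w'(s)$.

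Integrating gives $x^2 - w^2 = c$ for some constant $c$. Coupled with the arc-length relation $(x')^2 - (w')^2 = -1$, a brief case analysis eliminates $c \leq 0$: for $c = 0$ one would have $(x')^2 = (w')^2$, contradicting arc-length, while for $c = -r_0^2 < 0$ the substitution $x = r_0 \sinh u$, $w = r_0 \cosh u$ leads to the impossible $(u')^2 = -1/r_0^2$. Hence $c = r_0^2 > 0$, and setting $x = r_0 \cosh u(s)$, $w = r_0 \sinh u(s)$ converts arc-length into $u'(s) = \pm 1/r_0$. After absorbing the sign and an additive constant into an allowed reparametrization of $s$ in \eqref{CiftDonelYuzeyMnfld}, the profile becomes exactly the one in \eqref{CiftDonYuzOrtEgrParGYKosul}.

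For the converse direction and the de Sitter minimality claim, I would substitute the profile back into \eqref{CDYuzeyTemelForm3}: a short computation gives $\kappa \equiv 1/r_0$ and $h^3_{22} \equiv -1/r_0$, so $\mu \equiv -1/r_0$ is a nonzero constant, confirming parallel nonzero $H$. Moreover $\langle r(s,t), r(s,t)\rangle = x^2 - w^2 = r_0^2$ places $M$ in $\mathbb{S}^3_1(1/r_0^2)$, and the identity $e_3 = r/r_0$ yields $H = -(1/r_0) e_3 = -r/r_0^2$, which coincides with the mean curvature vector of $\mathbb{S}^3_1(1/r_0^2)$ inside $\mathbb{E}^4_1$. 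Equivalently, the $e_4$-component of $H$, i.e.\ the component along the direction normal to $M$ inside $\mathbb{S}^3_1$, is zero, so $M$ is minimal in the de Sitter space.

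I expect the most delicate step to be the sign and parametrization bookkeeping in the middle paragraph: one must check that the ambiguities in $u'$, in the branches of $\cosh$ and $\sinh$ (equivalently, in the signs of $x$ and $w$), and in translations of $s$ and $t$ can all be absorbed into the natural symmetries of \eqref{CiftDonelYuzeyMnfld}, so that the profile is genuinely unique up to these and no spurious families slip through. Once this is done, the remaining verifications are essentially mechanical.
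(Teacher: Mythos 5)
Your proposal is correct and follows essentially the same route as the paper: impose $D_{e_2}H=0$, use \eqref{CDYuzeyNormlKonForm} and \eqref{CiftDonYuzOrtEgrVek} to reduce to $xx'-ww'=0$, integrate to $x^2-w^2=\mathrm{const}$, and combine with the timelike arc-length condition to force the hyperbolic profile. Your explicit case analysis ruling out $c\le 0$ and the verification of the converse and de Sitter minimality merely fill in details the paper leaves as ``a direct calculation.''
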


\begin{proof}
Let $M$ be a timelike rotational surface in $\mathbb E^4_1$ defined by \eqref{CiftDonelYuzeyMnfld}. 
Then, we have an orthonormal moving frame
$\{e_1, e_2, e_3, e_4\}$ on $M$ in
$\mathbb E^4_1$ given by \eqref{TegNormVektorsCiftDY1}--\eqref{TegNormVektorsCiftDY4},  
and the components of the second fundamental forms are given by \eqref{CDYuzeyTemelForm3} and \eqref{CDYuzeyTemelForm4}.
Suppose that the mean curvature vector $H$ is parallel, i.e., $ D_{e_i} H=0$ for $ i=1, 2$.
By considering \eqref{CDYuzeyNormlKonForm} and \eqref{CiftDonYuzOrtEgrVek} we have
%%%%%%%%
$$
D_{e_2} H  =   \frac{ab (h^3_{22}-h^3_{11}) (xx'-ww')}{2(a^2x^2+b^2w^2)}e_4=0.
$$
%%%%%%%%%%%%%%
Since $M$ has nonzero mean curvature, this equation reduces $xx'-ww'=0$ that implies 
$x^2 - w^2 = \mu_0$, where $\mu_0$ is a real number. 
Since $\gamma$ is a timelike curve with parametrized by arc length parameter $s$, 
we can choose $\mu_0=r_0^2$ and the components of $\gamma$ as
$$
x(s)=r_0\cosh \frac{s}{r_0},\quad w(s)=r_0\sinh \frac{s}{r_0}.
$$
Therefore, $M$ is an open part of the timelike surface given by \eqref{CiftDonYuzOrtEgrParGYKosul} 
which is minimal in the de Sitter space $\mathbb{S}^3_1\left(\frac{1}{r_0^2}\right)\subset\mathbb{E}^4_1$.\\
\indent The converse of the proof follows from a direct calculation.
\end{proof}

Considering Theorem \ref{NonmaksClassTheo} and Theorem \ref{Rot-paralleH} we state the following:
\begin{cor}
A timelike rotational surface $M$ with nonzero mean curvature in $\mathbb E^4_1$ defined by \eqref{CiftDonelYuzeyMnfld}
has pointwise 1--type Gauss map of the first kind if and only if
it is an open part of the surface given by \eqref{CiftDonYuzOrtEgrParGYKosul}.
\end{cor}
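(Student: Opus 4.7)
The plan is to observe that this corollary is a direct consequence of chaining Theorem \ref{NonmaksClassTheo} and Theorem \ref{Rot-paralleH}, since both provide ``if and only if'' characterizations whose middle term (parallel nonzero mean curvature vector) matches exactly. There is essentially no new content to prove, only a logical splice; the real work was done in establishing the two theorems.

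For the forward direction, I would start from the hypothesis that the timelike rotational surface $M$ defined by \eqref{CiftDonelYuzeyMnfld} has nonzero mean curvature and pointwise 1--type Gauss map of the first kind. Applying Theorem \ref{NonmaksClassTheo} then yields that $M$ has parallel mean curvature vector. Feeding this into Theorem \ref{Rot-paralleH}, whose hypothesis is exactly ``parallel nonzero mean curvature vector'' for a surface of the form \eqref{CiftDonelYuzeyMnfld}, one concludes that $M$ is an open part of the surface parametrized by \eqref{CiftDonYuzOrtEgrParGYKosul}.

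For the converse, suppose $M$ is an open part of the surface in \eqref{CiftDonYuzOrtEgrParGYKosul}. By Theorem \ref{Rot-paralleH} (reverse implication), $M$ has parallel nonzero mean curvature vector. Theorem \ref{NonmaksClassTheo} (reverse implication) then gives that the Gauss map of $M$ is of pointwise 1--type of the first kind. This closes the equivalence.

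There is no genuine obstacle here: the only subtle point is that one must verify that the ``nonzero mean curvature'' hypothesis propagates cleanly through both citations, which it does because Theorem \ref{Rot-paralleH} explicitly assumes $H \neq 0$ and the parametrization in \eqref{CiftDonYuzOrtEgrParGYKosul} satisfies this (as shown in the proof of that theorem, where $xx' - ww' = 0$ combined with $x^2 - w^2 = r_0^2 > 0$ forces the mean curvature to be nonzero). Thus the corollary follows by a direct two-step application of the previously established theorems, and the proof can be stated in a single sentence.
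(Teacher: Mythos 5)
Your proposal is correct and matches the paper exactly: the paper states this corollary as an immediate consequence of Theorem \ref{NonmaksClassTheo} and Theorem \ref{Rot-paralleH}, with no further argument, which is precisely the two-step splice you describe. The only extra content you add is the (correct) check that the surface \eqref{CiftDonYuzOrtEgrParGYKosul} indeed has nonzero mean curvature, which the paper leaves implicit.
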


By combining \eqref{NorBundFlatMinimal} and \eqref{Rot-paralleH} 
we obtain the following classification theorem:
\begin{thm} \label{Class-PW1-kind}
Let $M$ be a timelike rotational surface in $\mathbb E^4_1$ defined by \eqref{CiftDonelYuzeyMnfld}.
Then $M$ has pointwise 1--type Gauss map of the first kind if and only if
$M$ is an open part of a timelike plane or the surface given by \eqref{CiftDonYuzOrtEgrParGYKosul}.
Moreover, the Gauss map $\nu = e_3 \wedge e_4$ of
the surface \eqref{CiftDonYuzOrtEgrParGYKosul} satisfies \eqref{DefinitonPW1type}
for $C = 0 $ and the function
$$
f=\| h\|^2 = \frac{2}{r_0^2}\left ( 1 - \frac{a^2b^2}{(a^2\cosh^2 (\frac{s}{r_0})+b^2\sinh^2 (\frac{s}{r_0}))^2} \right ).
$$
\end{thm}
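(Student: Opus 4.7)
The plan is to deduce this theorem as a direct consequence of the two preceding classification results by splitting on whether the mean curvature vanishes, and then to pin down the explicit function $f$ by plugging the surface \eqref{CiftDonYuzOrtEgrParGYKosul} into the Laplacian formula \eqref{laplace-GaussMap}.

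For the forward direction, assume $M$ has pointwise 1--type Gauss map of the first kind and consider two cases. If $H \equiv 0$, Theorem \ref{MinMinimal1Cesit1tip} yields that the normal bundle of $M$ is flat, and Theorem \ref{NorBundFlatMinimal} then forces $M$ to be an open part of a timelike plane. If $H$ is nonzero, Theorem \ref{NonmaksClassTheo} produces a parallel mean curvature vector, and Theorem \ref{Rot-paralleH} identifies $M$ as an open part of the surface \eqref{CiftDonYuzOrtEgrParGYKosul}. The converse is equally immediate: the plane case is Remark \ref{planeGaussmap}, while for \eqref{CiftDonYuzOrtEgrParGYKosul} the mean curvature vector is nonzero and parallel by Theorem \ref{Rot-paralleH}, so Theorem \ref{NonmaksClassTheo} gives $\Delta \nu = f \nu$ for some smooth function $f$, i.e.\ pointwise 1--type of the first kind with $C = 0$.

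To identify $f$, I would apply Lemma \ref{TeoremDursunArsan1} to the surface \eqref{CiftDonYuzOrtEgrParGYKosul} and verify that every term on the right-hand side of \eqref{laplace-GaussMap} other than $\|h\|^2 \nu$ vanishes. With $x(s) = r_0\cosh(s/r_0)$ and $w(s) = r_0\sinh(s/r_0)$, the identity $xx' - ww' = 0$ forces $\omega_{34}(e_2) = 0$ via \eqref{CDYuzeyNormlKonForm}, while $\omega_{34}(e_1) = 0$ holds directly. A short computation from \eqref{CDYuzeyTemelForm3} yields $h^3_{11} = \kappa = 1/r_0$ and $h^3_{22} = -1/r_0$, so $\mathrm{tr}A_3 = -h^3_{11} + h^3_{22} = -2/r_0$ is constant, giving $\nabla(\mathrm{tr}A_3) = 0$; similarly $\mathrm{tr}A_4 = 0$. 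Finally, \eqref{CiftDonYuzNormEgrilik} gives $R^D(e_1,e_2;e_3,e_4) = h^4_{12}(h^3_{11} + h^3_{22}) = 0$. Hence \eqref{laplace-GaussMap} collapses to $\Delta \nu = \|h\|^2 \nu$, identifying $f = \|h\|^2$.

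The remaining task is the closed-form computation of $\|h\|^2$. Using \eqref{sqr-length} with $\varepsilon_1 = -1$ and $\varepsilon_2 = \varepsilon_3 = \varepsilon_4 = 1$, together with $h^3_{12} = h^4_{11} = h^4_{22} = 0$, one gets $\|h\|^2 = (h^3_{11})^2 + (h^3_{22})^2 - 2(h^4_{12})^2$; the first two terms sum to $2/r_0^2$. For $h^4_{12}$, formula \eqref{CDYuzeyTemelForm4} together with $xw' - wx' = r_0(\cosh^2(s/r_0) - \sinh^2(s/r_0)) = r_0$ produces
\[
h^4_{12} = \frac{ab}{r_0\bigl(a^2\cosh^2(s/r_0) + b^2\sinh^2(s/r_0)\bigr)},
\]
and substituting back yields the displayed expression for $f$. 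I do not expect any conceptual obstacle beyond careful sign bookkeeping in \eqref{sqr-length}, which is the one place a subtle error could slip in.
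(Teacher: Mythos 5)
Your proposal is correct and follows essentially the same route as the paper, which obtains this theorem precisely by combining Theorems \ref{MinMinimal1Cesit1tip}, \ref{NonmaksClassTheo}, \ref{NorBundFlatMinimal} and \ref{Rot-paralleH} via the same split on whether $H$ vanishes. Your explicit verification that \eqref{laplace-GaussMap} collapses to $\Delta\nu=\|h\|^2\nu$ and the computation of $\|h\|^2=(h^3_{11})^2+(h^3_{22})^2-2(h^4_{12})^2$ with $h^3_{11}=-h^3_{22}=1/r_0$ both check out and match the stated formula for $f$.
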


Note that there is no non--planar timelike rotational surface in $\mathbb E^4_1$ defined by \eqref{CiftDonelYuzeyMnfld} with global 1--type Gauss map of the first kind.

Now, we investigate timelike rotational surfaces in $\mathbb E^4_1$ defined 
by \eqref{CiftDonelYuzeyMnfld} with pointwise 1--type Gauss map of the second kind.

\begin{thm} 
\label{Rot-surface-Gauss-sec-kind}
A timelike rotational surface $M$ in $\mathbb E^4_1$ defined by
\eqref{CiftDonelYuzeyMnfld} with flat normal bundle has pointwise 1--type Gauss map 
of the second kind if and only if
$M$ is an open part of a timelike plane in $\mathbb E^4_1$.
\end{thm}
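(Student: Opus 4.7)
The plan is to assume that $M$ is a timelike rotational surface in $\mathbb{E}^4_1$ given by \eqref{CiftDonelYuzeyMnfld}, with flat normal bundle and Gauss map satisfying $\Delta\nu=f(\nu+C)$ for some constant $C\ne 0$, and to deduce that $M$ must be an open part of a timelike plane. My first step is to apply Lemma \ref{TeoremDursunArsan1}. With $\mathrm{tr}\,A_4=0$, $\mathrm{tr}\,A_3=h^3_{22}-\kappa$, every entry in \eqref{CDYuzeyTemelForm3}--\eqref{CDYuzeyNormlKonForm} depending only on $s$, and the $R^D$-term dropping by the flat normal bundle hypothesis, the Laplacian simplifies to
\begin{equation*}
\Delta\nu=\|h\|^2\,e_3\wedge e_4-(\mathrm{tr}\,A_3)'\,e_1\wedge e_4-\omega_{34}(e_2)(h^3_{22}-\kappa)\,e_2\wedge e_3.
\end{equation*}
Writing $C=\sum_{A<B}\varepsilon_A\varepsilon_B C_{AB}\,e_A\wedge e_B$ and matching coefficients of $e_A\wedge e_B$ on the open set $\{f\ne 0\}$ forces $C_{12}=C_{13}=C_{24}=0$ along with the three relations
\begin{equation*}
(\mathrm{tr}\,A_3)'=fC_{14},\qquad -\omega_{34}(e_2)(h^3_{22}-\kappa)=fC_{23},\qquad \|h\|^2=f(1+C_{34}),
\end{equation*}
which I collectively label $(\ast)$. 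Imposing constancy of $C$ via Lemma \ref{CVektoruSabitLemmaMinkowski} gives further algebraic and ODE relations among $C_{14},C_{23},C_{34}$, and the Ricci equation \eqref{CiftDonYuzNormEgrilik} with $R^D=0$ factors as $h^4_{12}(\kappa+h^3_{22})=0$, producing two cases.

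In the case $h^4_{12}=0$, \eqref{CDYuzeyTemelForm4} gives $x=c_0 w$ for a constant $c_0$ with $c_0^2<1$, so $\kappa=0$. The $i=2$ instance of \eqref{MinkCVekSbtGYKos13} reduces to $\omega_{34}(e_2)C_{14}+\omega_{12}(e_2)C_{23}=0$, yielding $C_{23}/C_{14}=-\omega_{34}(e_2)/\omega_{12}(e_2)$, while dividing the second equation of $(\ast)$ by the first gives $C_{23}/C_{14}=+\omega_{34}(e_2)/\omega_{12}(e_2)$. If $C_{14}\ne 0$ these force $\omega_{34}(e_2)=0$, which is incompatible with $c_0\ne 0$; hence $C_{14}=0$, and then the first equation of $(\ast)$ forces $(h^3_{22})'=0$, so by the explicit form of $h^3_{22}$ one concludes $c_0=0$. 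Thus $x\equiv 0$ and $M$ lies in the timelike $zw$-plane. In the case $h^3_{22}=-\kappa$ with $h^4_{12}\ne 0$ (so $\kappa\ne 0$), the $i=1$ instance of \eqref{MinkCVekSbtGYKos12} gives $C_{23}=(h^4_{12}/\kappa)C_{14}$; combining this with the $i=2$ instances of \eqref{MinkCVekSbtGYKos13} and \eqref{MinkCVekSbtGYKos24} produces $(\kappa^2+(h^4_{12})^2)\,\omega_{34}(e_2)\,C_{14}=0$. Either $C_{14}=0$ chains through the same relations to $C_{23}=C_{34}=0$ and hence $C=0$ (contradiction), or $\omega_{34}(e_2)=0$ forces $x^2-w^2$ to be constant, placing $M$ on the surface of Theorem \ref{Rot-paralleH}; there $\mathrm{tr}\,A_3=-2\kappa$ is constant, so the first equation of $(\ast)$ gives $fC_{14}\equiv 0$ while $\|h\|^2=2(\kappa^2-(h^4_{12})^2)\not\equiv 0$ by the explicit formula for $h^4_{12}$, so $f\ne 0$ on a non-empty open set where $C_{14}=0$, chaining once more to $C=0$. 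The degenerate case $f\equiv 0$ forces $\|h\|^2=0$: in the first sub-case this gives $\kappa=h^3_{22}=0$ and a totally geodesic plane, and in the second it contradicts the explicit dependence of $h^4_{12}$ on $s$. The converse is immediate from Remark \ref{planeGaussmap}.

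The main obstacle will be the careful sign-bookkeeping in the pseudo-Riemannian formalism: the Case~A contradiction depends on the two derivations of $C_{23}/C_{14}$ carrying opposite signs, and the Case~B identity $(\kappa^2+(h^4_{12})^2)\omega_{34}(e_2)C_{14}=0$ emerges only after a specific linear combination of three equations from Lemma \ref{CVektoruSabitLemmaMinkowski}.
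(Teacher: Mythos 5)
Your argument is correct and runs along essentially the same lines as the paper's proof: both compute $\Delta\nu$ from Lemma \ref{TeoremDursunArsan1}, match coefficients against $f(\nu+C)$, split into the cases $h^4_{12}=0$ and $h^3_{22}=-h^3_{11}\neq 0$ via the vanishing of \eqref{CiftDonYuzNormEgrilik}, and then kill the off-diagonal components of $C$ by playing the coefficient relations against the constancy conditions of Lemma \ref{CVektoruSabitLemmaMinkowski}. In Case 1 your sign clash between the two expressions for $C_{23}/C_{14}$ is exactly the paper's pair \eqref{NewC-eq-1}--\eqref{NewC-eq-2} phrased as ratios (one small quibble: what forbids $\omega_{34}(e_2)=0$ there is $ab\neq 0$, not $c_0\neq 0$). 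The one genuine difference is in Case 2: the paper reads $C_{14}\neq 0$ and $C_{23}\neq 0$ directly off $fC_{14}=-2h^4_{12}\omega_{34}(e_2)$ and $fC_{23}=2h^3_{11}\omega_{34}(e_2)$, which tacitly assumes $\omega_{34}(e_2)\neq 0$, whereas you isolate the sub-case $\omega_{34}(e_2)=0$, identify it with the parallel-mean-curvature surface \eqref{CiftDonYuzOrtEgrParGYKosul} of Theorem \ref{Rot-paralleH} (which is indeed pseudo-umbilical with flat normal bundle, so it genuinely satisfies the Case 2 hypotheses), and rule it out by forcing every component of $C$ to vanish. That extra sub-case analysis closes a small gap in the published argument, so keep it; your identity $(\kappa^2+(h^4_{12})^2)\,\omega_{34}(e_2)\,C_{14}=0$ does follow from the stated combination of \eqref{MinkCVekSbtGYKos12}, \eqref{MinkCVekSbtGYKos13} and \eqref{MinkCVekSbtGYKos24}.
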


\begin{proof}
Let $M$ be a timelike rotational surface with flat normal bundle in $\mathbb E^4_1$ defined by \eqref{CiftDonelYuzeyMnfld}.
Thus, we have
$ R^D(e_1, e_2; e_3, e_4) = h^4_{12}(h^3_{11} + h^3_{22}) =0$ which implies that
$h^4_{12}= 0 $ or $ h^3_{11} = -h^3_{22}\neq 0 $.

\noindent {\bf Case 1.} $h^4_{12}= 0 $. Now considering the second equation in 
\eqref{CDYuzeyTemelForm4} the general solution of $xw'-wx'=0$ is
$x= c_0 w$, where $c_0$ is constant. 
Hence, $M$ is a timelike regular cone in the Minkowski space $\mathbb E^4_1$. 
For $c_0= 0$, it can be easily seen that $M$ is an open part of the timelike
$zw$--plane. We suppose that $c_0\not =0$.
If we parametrize the line $x= c_0 w$ with respect to arc length parameter $s$, 
we then have
$w(s)= \pm\frac{1}{\sqrt{1- c_0^2}} s + w_0$ and 
$ x(s)=\pm\frac{c_0}{\sqrt{1- c_0^2}} s + c_0w_0, \; w_0, c_0 \in \mathbb{R}$ with $c_0^2<1$.
Thus, from \eqref{CDYuzeyTemelForm3}--\eqref{CDYuzeyNormlKonForm} we obtain that
\begin{align} 
\label{quantities-cone}
\begin{split}
h^3_{11}&=0, \quad \quad \; \; \, h^3_{22}=\mp\frac{c_0(a^2+ b^2)}{\sqrt{1- c_0^2}(a^2c_0^2 + b^2)w},\\
h^3_{12}&=0, \quad \quad \; \; \;  h^4_{ij}=0, \; i,j =1,2, \\
\omega_{12}(e_1)&= 0, \quad   \omega_{12}(e_2)=  \pm\frac{1}{\sqrt{1- c_0^2}w}, \\
\omega_{34}(e_1)&=  0, \quad   \omega_{34}(e_2)=\mp\frac{ab\sqrt{1- c_0^2}}{(a^2c_0^2 +  b^2)w }.
\end{split}
\end{align}
Therefore, using the equations \eqref{CDYUzCodazzi1} and \eqref{laplace-GaussMap}
the Laplacian of the Gauss map
$\nu = e_3\wedge e_4$ is given by
\begin{align} \label{laplace-GaussMap-cone}
\Delta\nu =&||h||^2\nu + h^3_{22} \omega_{12}(e_2) e_1\wedge e_4 -  h^3_{22}\omega_{34}(e_2) e_2\wedge e_3.
\end{align}

Assume that M has pointwise 1--type Gauss map of the second kind. 
Then there exists a smooth function $f$ and nonzero constant vector $C$ 
such that \eqref{DefinitonPW1type} is satisfied. 
Therefore, from \eqref{DefinitonPW1type} and \eqref{laplace-GaussMap-cone} we get
\begin{align}
\label{HHSbtRD0GaussTasCone1}
f(1+C_{34})=&\|h\|^2 = (h^3_{22})^2,\\
\label{HHSbtRD0GaussTasCone2}
fC_{14} =& - h^3_{22} \omega_{12}(e_2),\\
\label{HHSbtRD0GaussTasCone3}
fC_{23}=& - h^3_{22} \omega_{34}(e_2),\\
\label{HHSbtRD0GaussTasCone4}
C_{12}=&C_{13}=C_{24}=0.
\end{align}
It follows from \eqref{quantities-cone}, \eqref{HHSbtRD0GaussTasCone2}
and \eqref{HHSbtRD0GaussTasCone3} that $C_{14} \not =0$ and $C_{23} \not =0$.
%%%
Now, from \eqref{HHSbtRD0GaussTasCone2} and \eqref{HHSbtRD0GaussTasCone3} we have
\begin{equation} \label{NewC-eq-1}
\omega_{34}(e_2)C_{14} -  \omega_{12}(e_2)C_{23} =0.
\end{equation}

On the other hand, for $i=2$
equation \eqref{MinkCVekSbtGYKos13} implies
\begin{equation} \label{NewC-eq-2}
   \omega_{34}(e_2)C_{14} + \omega_{12}(e_2)C_{23} = 0.
\end{equation}
Thus, considering \eqref{quantities-cone} the solution of equations  \eqref{NewC-eq-1}  and
\eqref{NewC-eq-2}  gives
$C_{14} = C_{23} =0$ which is a contradiction. That is, $c_0=0$, and thus $x=0$.
Therefore $M$ is an open part of a timelike $zw$--plane.

\noindent {\bf Case 2.} $ h^3_{22} = - h^3_{11}\neq 0$, that is, 
$M$ is a pseudo--umbilical timelike surface in $\mathbb E_1^4$.
Now we will show that $M$ has no pointwise 1--type Gauss map of the second kind.
Note that for this case $h^4_{12} \not = 0$. 
If it were zero, then $M$ would be a cone obtained in Case 1
which is not pseudo--umbilical.
Similarly, considering \eqref{laplace-GaussMap} and using the Codazzi equation \eqref{CDYUzCodazzi1} 
we obtain the Laplacian of the Gauss map $\nu$ as
\begin{align} 
\label{laplace-GaussMap-pseud-U}
\Delta\nu =&||h||^2\nu +2 h_{12}^4 \omega_{34}(e_2) e_1\wedge e_4 + 2 h_{11}^3 \omega_{34}(e_2) e_2\wedge e_3.
\end{align}

Suppose that $M$ has pointwise 1--type Gauss map of the second kind.
Thus, \eqref{DefinitonPW1type} is satisfied
for some function $f\neq 0$ and nonzero constant vector $C$.
From \eqref{DefinitonPW1type}, \eqref{constantvectorC} and \eqref{laplace-GaussMap-pseud-U} 
we have
\begin{align}
\label{HHSbtRD0GaussTasCVek1}
f(1+C_{34})=&\|h\|^2,\\
\label{HHSbtRD0GaussTasCVek2}
fC_{14}  =& - 2 h^4_{12} \omega_{34}(e_2),\\
\label{HHSbtRD0GaussTasCVek3}
fC_{23} =&  2 h^3_{11} \omega_{34}(e_2),\\
\label{HHSbtRD0GaussTasCVek4}
C_{12}=&C_{13}=C_{24} =0.
\end{align}
From \eqref{HHSbtRD0GaussTasCVek2} and \eqref{HHSbtRD0GaussTasCVek3} 
it is seen that $C_{14} \not = 0$ and $C_{23} \not = 0$.
Equations \eqref{HHSbtRD0GaussTasCVek2} and \eqref{HHSbtRD0GaussTasCVek3} imply that
\begin{eqnarray}
\label{neweqC1}  h^3_{11}C_{14} +  h^4_{12} C_{23} =0.
\end{eqnarray}

From \eqref{MinkCVekSbtGYKos12} for $i=1$, we also obtain that
\begin{eqnarray}
\label{neweqC2}     h^4_{12} C_{14} - h^3_{11} C_{23} =0.
\end{eqnarray}
Hence, equations \eqref{neweqC1} and \eqref{neweqC2} give that $h^4_{12} = h^3_{11}  =0$, $(h^3_{22}  =0)$, 
that is, $M$ is an open part of the timelike $zw$--plane.

From Remark \ref{planeGaussmap}, the converse of the proof is trivial.
\end{proof}

\begin{cor}
There exists no a non--planar timelike rotational surface in $\mathbb E^4_1$ defined by 
\eqref{CiftDonelYuzeyMnfld} with flat normal bundle and pointwise 1--type Gauss map 
of the second kind.
\end{cor}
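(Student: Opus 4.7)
\emph{Plan.} The corollary is an immediate consequence of Theorem~\ref{Rot-surface-Gauss-sec-kind}. That theorem gives an ``if and only if'' classification: a timelike rotational surface of the form \eqref{CiftDonelYuzeyMnfld} with flat normal bundle carries a pointwise 1--type Gauss map of the second kind precisely when it is an open part of a timelike plane in $\mathbb{E}^4_1$. From this characterization the corollary follows at once by contraposition: were such a surface non--planar, Theorem~\ref{Rot-surface-Gauss-sec-kind} would still force it to be an open part of a timelike plane, contradicting the non--planarity hypothesis. Hence no such non--planar surface can exist.

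Structurally, the corollary mirrors the earlier corollary that ruled out non--planar timelike rotational surfaces with zero mean curvature and pointwise 1--type Gauss map of the first kind (stated after Theorem~\ref{NorBundFlatMinimal}). In both situations the substantive content is entirely absorbed into the preceding classification theorem, and the corollary merely packages the ``no non--trivial examples exist'' conclusion. Accordingly, the proof I would write is a one-sentence logical deduction, and the only place where real work occurs is in the theorem itself.

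The hard part, therefore, does not reside in the corollary but in Theorem~\ref{Rot-surface-Gauss-sec-kind}, namely establishing that in both branches of the flat--normal--bundle condition $h^4_{12}(h^3_{11}+h^3_{22})=0$---i.e., the cone case $h^4_{12}=0$ and the pseudo--umbilical case $h^3_{11}=-h^3_{22}\neq 0$---the pointwise 1--type equation \eqref{DefinitonPW1type} with nonzero constant vector $C$ is incompatible with the rotational structure unless the surface degenerates to a plane. That argument relies on computing $\Delta \nu$ via Lemma~\ref{TeoremDursunArsan1}, comparing coefficients against \eqref{constantvectorC}, and then using the constancy equations of Lemma~\ref{CVektoruSabitLemmaMinkowski} to force the off-diagonal components of $C$ to vanish simultaneously. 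Since that theorem has already been proved, invoking it makes the corollary immediate.
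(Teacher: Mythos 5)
Your proposal is correct and matches the paper's treatment: the corollary is stated immediately after Theorem~\ref{Rot-surface-Gauss-sec-kind} with no separate proof, precisely because it is the direct logical consequence (by contraposition) of that theorem's ``if and only if'' classification, exactly as you argue.
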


\begin{thm}
A timelike rotational surface with zero mean curvature and nonflat normal bundle 
in $\mathbb{E}^4_1$ defined by \eqref{CiftDonelYuzeyMnfld} has no pointwise 1--type Gauss map 
of the second kind.
\end{thm}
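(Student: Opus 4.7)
The strategy is to assume, for contradiction, that such a surface $M$ admits pointwise 1--type Gauss map of the second kind, and then to play the Laplacian formula of Lemma \ref{TeoremDursunArsan1} against the constancy conditions of Lemma \ref{CVektoruSabitLemmaMinkowski}. First, I would translate the hypotheses into pointwise data: the vanishing of $H$ combined with \eqref{CiftDonYuzOrtEgrVek} yields $h^3_{11} = h^3_{22} = \kappa$, while nonflatness of the normal bundle via \eqref{CiftDonYuzNormEgrilik} supplies an open set on which $\kappa \neq 0$ and $h^4_{12} \neq 0$; all subsequent computations take place on that open set.

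Next, I would compute $\Delta \nu$. Since $H = 0$, the last sum in \eqref{laplace-GaussMap} disappears, and since the traces $\mathrm{tr}\, A_3 = -h^3_{11} + h^3_{22} = 0$ and $\mathrm{tr}\, A_4 = 0$ vanish, so do the two gradient terms. What remains is
$$\Delta \nu = \|h\|^2\, \nu + 2\varepsilon_1\varepsilon_2\, R^D(e_1,e_2;e_3,e_4)\, e_1 \wedge e_2 = \|h\|^2\, \nu - 4\kappa h^4_{12}\, e_1 \wedge e_2,$$
using $R^D(e_1,e_2;e_3,e_4) = h^4_{12}(h^3_{11}+h^3_{22}) = 2\kappa h^4_{12}$ and $\varepsilon_1\varepsilon_2 = -1$.

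Expanding $C$ as in \eqref{constantvectorC} and matching coefficients in $\Delta \nu = f(\nu + C)$ gives $C_{13} = C_{14} = C_{23} = C_{24} = 0$, the scalar relation $f(1+C_{34}) = \|h\|^2$ on the $e_3 \wedge e_4$ slot, and most importantly
$$f\, C_{12} = 4\kappa h^4_{12} \neq 0$$
on the $e_1 \wedge e_2$ slot; in particular $C_{12} \neq 0$ (and $f \neq 0$), and since $C$ is constant, $C_{12}$ is a nonzero constant throughout.

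The contradiction is then extracted from Lemma \ref{CVektoruSabitLemmaMinkowski}. After substituting the zero components of $C$ and the vanishings $h^3_{12} = h^4_{11} = h^4_{22} = 0$ together with $\omega_{12}(e_1) = \omega_{34}(e_1) = 0$, equation \eqref{MinkCVekSbtGYKos13} with $i = 2$ and equation \eqref{MinkCVekSbtGYKos14} with $i = 1$ collapse to
$$\kappa\, C_{12} + h^4_{12}\, C_{34} = 0, \qquad h^4_{12}\, C_{12} - \kappa\, C_{34} = 0,$$
from which eliminating $C_{34}$ yields $(\kappa^2 + (h^4_{12})^2)\, C_{12} = 0$, forcing $C_{12} = 0$ and contradicting the previous paragraph. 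The main delicate point in executing the plan is assembling $\Delta \nu$ correctly, since several terms cancel only thanks to the sign pattern $\varepsilon_1 = -1$, $\varepsilon_2 = \varepsilon_3 = \varepsilon_4 = 1$ and the specific vanishings in \eqref{CDYuzeyTemelForm3}--\eqref{CDYuzeyTemelForm4}; once that is done, the constancy lemma delivers the contradiction almost mechanically.
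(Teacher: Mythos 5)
Your proposal is correct and follows essentially the same route as the paper: the same reduction $h^3_{11}=h^3_{22}=\kappa$, the same Laplacian $\Delta\nu=\|h\|^2\nu-4\kappa h^4_{12}\,e_1\wedge e_2$, and the same contradiction $C_{12}=0$ extracted from the constancy lemma (the paper uses \eqref{MinkCVekSbtGYKos13} and \eqref{MinkCVekSbtGYKos24} with $i=2$, which give the identical pair of linear relations you obtain).
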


\begin{proof}
Let $M$ be a timelike rotational surface in $\mathbb{E}^4_1$ defined by \eqref{CiftDonelYuzeyMnfld}
and $\{e_1, e_2, e_3, e_4\}$ be an orthonormal moving frame on $M$ in $\mathbb{E}^4_1$ 
given by \eqref{TegNormVektorsCiftDY1}--\eqref{TegNormVektorsCiftDY4}.
Then the coefficients of the second fundamental form are given by 
\eqref{CDYuzeyTemelForm3} and \eqref{CDYuzeyTemelForm4}.
Since the mean curvature is zero and its normal bundle is nonflat,
from \eqref{CiftDonYuzOrtEgrVek} and \eqref{CiftDonYuzNormEgrilik} 
we have $h_{11}^3=h_{22}^3$ 
and $R^D(e_1,e_2;e_3,e_4)=h^4_{12}(h^3_{11}+h^3_{22})=2h^4_{12}h^3_{11}\neq 0$.
Hence the Laplacian of Gauss map $\nu=e_3\wedge e_4$ from \eqref{laplace-GaussMap} is given by
\begin{align} \label{laplace-GaussMap-minimal}
\Delta\nu =&||h||^2\nu - 4h_{12}^4h_{11}^3 e_1\wedge e_2.
\end{align}
We assume that M has pointwise 1--type Gauss map of the second kind. Therefore, from
\eqref{DefinitonPW1type}, \eqref{constantvectorC} and \eqref{laplace-GaussMap-minimal}
we have
\begin{align}
\label{HHSbtRD0GaussTasCVek1-1}
f(1+C_{34})=&\|h\|^2=2(h_{11}^3)^2 - 2(h_{12}^4)^2,\\
\label{HHSbtRD0GaussTasCVek2-2}
fC_{12}  =& 4 h^4_{12}h^3_{11} ,\\
\label{HHSbtRD0GaussTasCVek3-3}
C_{13}=&C_{14}=C_{23}= C_{24}= 0
\end{align}
from which we have $C_{12}\neq 0$. Considering \eqref{MinkCVekSbtGYKos13} and \eqref{MinkCVekSbtGYKos24}
for $i=2$ we obtain
$h^3_{22}C_{12} + h^4_{12}C_{34}=0$ and $h^4_{12}C_{12}-h^3_{22}C_{34}=0$.
The solution of these equations gives $C_{12}=0$ which is a contradiction.
Therefore, the Gauss map $\nu$ is not of pointwise 1--type Gauss map of the second kind.
\end{proof}

\section{Rotational surfaces in $\mathbb{E}^4_2$ with pointwise 1--type Gauss map}

In this section, we determine the rotational surfaces in the pseudo--Euclidean space $\mathbb{E}^4_2$ 
defined by \eqref{1DY} and \eqref{2DY} with pointwise 1--type Gauss map.

\begin{thm}
\label{M1pw2kind}
Let $M_1(b)$ be a non--planar regular rotational surface with zero mean curvature in $\mathbb{E}^4_2$ 
defined by \eqref{1DY}. Then,
\begin{itemize}
\item[i.] for some constants $\lambda_0\neq 0$ and $\mu_0$, the regular surface $M_1(1)$ 
with the profile curve $\alpha$ whose components satisfy 
\begin{equation}
\label{sol1}
(w(s)+y(s))^2+\lambda_0(w(s)-y(s))^2=\mu_0
\end{equation}
has pointwise 1--type Gauss map of the second kind. 

\item[ii.] for $b\neq 1$, the timelike surface $M_1(b)$ has pointwise 1--type Gauss map of the second kind 
if and only if the profile curve $\alpha$ is given by $y(s)=b_0(w(s))^{\pm b}$ for some constant $b_0\neq 0$.
\end{itemize}
\end{thm}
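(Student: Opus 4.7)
The strategy is to compute $\Delta\nu$ via Lemma~\ref{TeoremDursunArsan1} under the zero mean curvature hypothesis, match it against $f(\nu+C)$ componentwise in the bivector basis, and then impose constancy of $C$ through Lemma~\ref{CVektoruSabitLemmaMinkowski}. Since $H=0$, the last term of \eqref{laplace-GaussMap} drops out; together with $h^4_{11}=h^4_{22}=0$ and the zero-mean-curvature identity $h^3_{22}=-\varepsilon\varepsilon^* h^3_{11}$ from \eqref{12DYOrtEgrVek}, the signed traces $\mathrm{tr}\,A_3$ and $\mathrm{tr}\,A_4$ also vanish, so the gradient terms disappear. Using $R^D(e_1,e_2;e_3,e_4)=-2\varepsilon^* h^3_{11}h^4_{12}$ from \eqref{12DYNormEgrilik}, one obtains
\begin{equation*}
\Delta\nu=\|h\|^2\,\nu-4\varepsilon h^3_{11}h^4_{12}\,e_1\wedge e_2.
\end{equation*}
Expanding $C$ via \eqref{constantvectorC} and equating coefficients of $\Delta\nu=f(\nu+C)$ forces $C_{13}=C_{14}=C_{23}=C_{24}=0$ and leaves
\begin{equation*}
\|h\|^2=f(1+\varepsilon\varepsilon^* C_{34}),\qquad -4\varepsilon h^3_{11}h^4_{12}=f\varepsilon\varepsilon^* C_{12}.
\end{equation*}

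I would next impose constancy of $C$. Since $\omega_{12}(e_2)=\omega_{34}(e_2)=0$ and $h^3_{12}=h^4_{11}=h^4_{22}=0$, equations \eqref{MinkCVekSbtGYKos12}--\eqref{MinkCVekSbtGYKos34} collapse to $e_i(C_{12})=e_i(C_{34})=0$ (so $C_{12}$ and $C_{34}$ are constants) plus the homogeneous pair
\begin{equation*}
h^4_{12}C_{12}+h^3_{11}C_{34}=0,\qquad h^3_{11}C_{12}+h^4_{12}C_{34}=0
\end{equation*}
coming from \eqref{MinkCVekSbtGYKos14} and \eqref{MinkCVekSbtGYKos23} at $i=1$. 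Since $C\neq0$ forces $(C_{12},C_{34})\neq(0,0)$, the determinant must vanish, yielding the key algebraic condition $(h^4_{12})^2=(h^3_{11})^2$.

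For part (ii), substituting \eqref{1DYuzeyTemelForm3}--\eqref{1DYuzeyTemelForm4} into $(h^4_{12})^2=(h^3_{11})^2$ and using the timelike identity $\varepsilon\varepsilon^*=-1$ factorizes the equation as
\begin{equation*}
(b-1)(b+1)(wy'+byw')(wy'-byw')=0.
\end{equation*}
For $b>0$, $b\neq1$, one of the last two factors must vanish; each is a separable linear ODE whose integration gives $y=b_0 w^{\pm b}$. Conversely, a direct substitution of $y=b_0 w^{\pm b}$ shows that both $(h^4_{12})^2=(h^3_{11})^2$ and zero mean curvature hold automatically. The homogeneous pair then admits a nonzero solution $(C_{12},C_{34})$, and solving the two scalar relations yields $f=-8\varepsilon(h^3_{11})^2\neq0$ together with $|C_{12}|=C_{34}=\tfrac12$, exhibiting a bona fide nonzero constant $C$.

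For part (i), $b=1$ makes $h^4_{12}=-\varepsilon\varepsilon^* h^3_{11}$ identically, so $(h^4_{12})^2=(h^3_{11})^2$ is automatic and the only constraint is zero mean curvature, which after clearing denominators reads
\begin{equation*}
((y')^2-(w')^2)(yw'-wy')+(w^2-y^2)(w'y''-y'w'')=0.
\end{equation*}
Setting $u=w+y$, $v=w-y$ converts this to $-u'v'(uv'-vu')+uv(v'u''-u'v'')=0$. Differentiating \eqref{sol1} gives $uu'+\lambda_0 vv'=0$, so $u'=-\lambda_0 vv'/u$; a second differentiation supplies $u''$, and substituting back makes each of the two bracketed terms evaluate to $\pm\lambda_0\mu_0\,v(v')^3/u^2$ via the identity $u^2+\lambda_0 v^2=\mu_0$, so they cancel. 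The nonzero constant $C$ is then determined by $C_{12}=\varepsilon\varepsilon^* C_{34}$ (forced by the constancy system) together with either scalar relation. I expect the main obstacle to be this concluding cancellation in part (i), where the bulky expression for $u''$ must be substituted and the quadratic identity $u^2+\lambda_0 v^2=\mu_0$ invoked at two distinct points to collapse the terms to zero.
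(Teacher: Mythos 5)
Your proposal is correct and follows essentially the same route as the paper: compute $\Delta\nu$ under $H=0$ to get \eqref{lapzeroM1}, match coefficients to force $C_{13}=C_{14}=C_{23}=C_{24}=0$, extract the homogeneous pair $h^3_{11}C_{12}+h^4_{12}C_{34}=0$, $h^4_{12}C_{12}+h^3_{11}C_{34}=0$ from Lemma~\ref{CVektoruSabitLemmaMinkowski}, impose the vanishing determinant $(h^3_{11})^2=(h^4_{12})^2$, and split into the $b=1$ case and the ODE $wy'=\pm byw'$ giving $y=b_0w^{\pm b}$, with the same $f$ and $C$. The only divergence is in part (i), where the paper simply cites \cite{Dursun-BB2} for the fact that \eqref{sol1} describes the zero--mean--curvature profile curves of $M_1(1)$, while you verify the needed implication directly via the substitution $u=w+y$, $v=w-s y$ --- and your claimed cancellation of the two terms to $\pm\lambda_0\mu_0\,v(v')^3/u^2$ checks out.
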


\begin{proof}
Assume that $M_1(b)$ is a non--planar regular rotational surface with zero mean curvature 
in $\mathbb{E}_2^4$ defined by \eqref{1DY}.
From equation \eqref{laplace-GaussMap}, the Laplacian of the Gauss map of 
the rotational surface $M_1(b)$ is given by
\begin{align}
\label{lapM1}
\Delta\nu=&||h||^2\nu+2h_{12}^4(\varepsilon^*h_{22}^3-\varepsilon h_{11}^3)e_1\wedge e_2\notag\\
&+\omega_{34}(e_1)(\varepsilon h_{11}^3+\varepsilon^*h_{22}^3)e_1\wedge e_3+(\varepsilon\varepsilon^*e_2(h_{11}^3)+e_2(h_{22}^3))e_2\wedge e_4.
\end{align}
Since the mean curvature of $M_1(b)$ is zero, equation \eqref{lapM1} becomes
\begin{equation}
\label{lapzeroM1}
\Delta\nu=||h||^2\nu-4\varepsilon h_{11}^3h_{12}^4 e_1\wedge e_2.
\end{equation} 
Suppose that $M_1(b)$ has pointwise 1--type Gauss map of second kind. 
Comparing \eqref{DefinitonPW1type} and \eqref{lapzeroM1}, we get 
\begin{align}
\label{eq1}
f(1+\varepsilon\varepsilon^*C_{34})&=||h||^2,\\
\label{eq2}
fC_{12}&=-4\varepsilon^*h_{11}^3h_{12}^4,\\
\label{eq3}
C_{13}&=C_{14}=C_{23}=C_{24}=0.
\end{align}
For $i=1, 2$, from \eqref{MinkCVekSbtGYKos13} and \eqref{MinkCVekSbtGYKos14}, we have
\begin{align}
\label{eq4}
h_{11}^3C_{12}+h_{12}^4C_{34}&=0,\\
\label{eq5}
h_{12}^4C_{12}+h_{11}^3C_{34}&=0.
\end{align}
Since the Gauss map $\nu$ is of the second kind, 
equations \eqref{eq4} and \eqref{eq5} must have nonzero solution 
which implies $(h_{11}^3)^2-(h_{12}^4)^2=0$. Considering the first equations in 
\eqref{1DYuzeyTemelForm3} and \eqref{1DYuzeyTemelForm4} 
we have $(b^2-1)(b^2y^2(s){w^\prime}^2(s)-w^2(s){y^\prime}^2(s))=0$, 
that is, $b=1$ or $b^2y^2(s){w^\prime}^2(s)-w^2(s){y^\prime}^2(s)=0$. 

If $b=1$, it was shown that the components of the profile curve $\alpha$ of the surface $M_1(1)$ 
with zero mean curvature satisfy equation \eqref{sol1}, \cite{Dursun-BB2}.
In this case, from \eqref{1DYuzeyTemelForm3} and \eqref{1DYuzeyTemelForm4} 
it can be seen easily that $h_{12}^4=-\varepsilon\varepsilon^* h_{11}^3$. 
Hence, by using equations \eqref{eq1}, \eqref{eq2} and
\eqref{eq4}, we find $C_{12}=-\frac{1}{2}$, $C_{34}=-\frac{\varepsilon\varepsilon^*}{2}$ 
and $f=-8\varepsilon (h_{22}^3)^2$.
Since $\alpha$ is a plane curve, $h_{22}^3=\kappa$, 
where $\kappa$ is a curvature of the curve $\alpha$. 
Thus, the Gauss map $\nu$ of $M_1(1)$ satisfies \eqref{DefinitonPW1type} 
for the function $f=-8\varepsilon\kappa^2$ and 
the constant vector $C=-\frac{\varepsilon\varepsilon^*}{2}e_1\wedge e_2-\frac{1}{2}e_3\wedge e_4$. 
This completes the proof of (a).

If $b^2y^2(s){w^\prime}^2(s)-w^2(s){y^\prime}^2(s)=0$ and $b\neq 1$, 
then we have $y(s)=b_0(w(s))^{\pm b}$,
where $b_0$ is nonzero constant. Also, the rotational surface $M_1(b)$ 
with this profile curve $\alpha$ is timelike, i.e., $\varepsilon\varepsilon^*=-1$. 
Hence, from the first equations in \eqref{1DYuzeyTemelForm3} and \eqref{1DYuzeyTemelForm4}, 
we get $h_{12}^4=\pm h_{11}^3$.
By using equations \eqref{eq1}, \eqref{eq2} and \eqref{eq4}, 
we get the function $f=-8\varepsilon\kappa^2$ 
and the constant vector $C=\pm\frac{1}{2}e_1\wedge e_2-\frac{1}{2}e_3\wedge e_4$. 

The converse of the proof is followed from a direct calculation. This completes the proof of (b).
\end{proof}

Similarly, we can state the following theorem for the rotational surface $M_2(b)$ 
defined by \eqref{2DY} in the pseudo--Euclidean space $\mathbb{E}^4_2$.

\begin{thm}
\label{M2pw2kind}
Let $M_2(b)$ be a non--planar regular rotational surface with zero mean curvature in $\mathbb{E}^4_2$ defined by \eqref{2DY}. Then,
\begin{itemize}
\item[i.] for some constants $\lambda_0\neq 0$ and $\mu_0$, 
the regular surface $M_2(1)$ with the profile curve $\beta$ whose components satisfy
\begin{equation}
\label{2sol1}
(x(s)+z(s))^2+\lambda_0(x(s)-z(s))^2=\mu_0
\end{equation}
has pointwise 1--type Gauss map of the second kind. 

\item[ii.] for $b\neq 1$,  the spacelike surface $M_2(b)$ has pointwise 1--type Gauss map of the second kind 
if and only if the profile curve $\beta$ is given by $z(s)=\bar{b}_0(x(s))^{\pm b}$ 
for some constant $\bar{b}_0\neq 0$.
\end{itemize}
\end{thm}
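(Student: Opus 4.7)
The approach will be to mirror the proof of Theorem~\ref{M1pw2kind}, since the structural data of $M_2(b)$ in \eqref{2DYuzeyTemelForm3}--\eqref{2DYuzeyNormlKonForm} has the same formal features as that of $M_1(b)$---namely $\omega_{12}(e_2)=\omega_{34}(e_2)=0$, the off-diagonal pattern $h_{12}^3=h_{11}^4=h_{22}^4=0$, and the Codazzi pair \eqref{12DYUzCodazzi1}--\eqref{12DYUzCodazzi2}---so every step transfers verbatim. The new content lies only in tracking the algebraic consequences of the explicit formulas for $h_{11}^3$ and $h_{12}^4$ from \eqref{2DYuzeyTemelForm3}--\eqref{2DYuzeyTemelForm4}.

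First, I compute $\Delta\nu$ from \eqref{laplace-GaussMap}. Since $H=0$ forces $h_{22}^3=-\varepsilon\varepsilon^* h_{11}^3$ and $\mathrm{tr}\,A_3=\mathrm{tr}\,A_4=0$, the Laplacian collapses exactly as in \eqref{lapzeroM1} to $\Delta\nu=\|h\|^2\nu-4\varepsilon\,h_{11}^3 h_{12}^4\,e_1\wedge e_2$. Expanding $C=\sum_{A<B}\varepsilon_A\varepsilon_B C_{AB}\,e_A\wedge e_B$ and comparing coefficients in $\Delta\nu=f(\nu+C)$ gives $C_{13}=C_{14}=C_{23}=C_{24}=0$ together with $f(1+\varepsilon\varepsilon^* C_{34})=\|h\|^2$ and $fC_{12}=-4\varepsilon^* h_{11}^3 h_{12}^4$. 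I then impose constancy of $C$ via Lemma~\ref{CVektoruSabitLemmaMinkowski}: equation \eqref{MinkCVekSbtGYKos13} for $i=2$ and \eqref{MinkCVekSbtGYKos14} for $i=1$ reduce to the homogeneous system
\begin{equation*}
h_{11}^3\,C_{12}+h_{12}^4\,C_{34}=0,\qquad h_{12}^4\,C_{12}+h_{11}^3\,C_{34}=0,
\end{equation*}
whose nontrivial solvability (required by $C\neq 0$) forces $(h_{11}^3)^2=(h_{12}^4)^2$. Substituting \eqref{2DYuzeyTemelForm3}--\eqref{2DYuzeyTemelForm4} turns this into $(b^2zx'-xz')^2=b^2(zx'-xz')^2$, which factors as $(b^2-1)\bigl(b^2z^2(x')^2-x^2(z')^2\bigr)=0$, producing the desired dichotomy.

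For $b\neq 1$, the surviving factor gives $z'/z=\pm b\,x'/x$, integrating to $z(s)=\bar b_0\,x(s)^{\pm b}$ with $\bar b_0\neq 0$; the identity $(x')^2-(z')^2=(1-b^2z^2/x^2)(x')^2$ then yields $\varepsilon=\varepsilon^*$, so $\varepsilon_1=\varepsilon_2$ and $M_2(b)$ is spacelike, matching (ii). For $b=1$ the criterion $(h_{11}^3)^2=(h_{12}^4)^2$ is automatic (indeed $h_{12}^4=\varepsilon\varepsilon^* h_{11}^3$), so the restriction on $\beta$ must come from $H=0$ alone, which, after clearing denominators using $\bar A^2=\varepsilon((x')^2-(z')^2)$ and $\bar q^2=\varepsilon^*(x^2-z^2)$, reduces to
\begin{equation*}
\bigl((x')^2-(z')^2\bigr)(zx'-xz')+(x^2-z^2)(z'x''-x'z'')=0.
\end{equation*}
The change of variables $u=x+z$, $v=x-z$ (exactly the diagonalization used for $M_1(1)$ in \cite{Dursun-BB2}) converts this into $\tfrac{d}{ds}\bigl(uu'/(vv')\bigr)=0$, whose first integral is $u^2+\lambda_0 v^2=\mu_0$, i.e.\ \eqref{2sol1}. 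The converse in both cases will be a routine direct computation recovering explicit values of $f$ and of $C_{12}, C_{34}$.

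The main obstacle will be the $b=1$ case: neither the Gauss-map comparison nor Lemma~\ref{CVektoruSabitLemmaMinkowski} imposes any constraint beyond $H=0$ itself, so the whole content of \eqref{2sol1} has to be extracted from the zero mean curvature ODE. Recognizing $u=x+z$, $v=x-z$ as the correct lightlike coordinates in the Lorentzian $xz$-plane---the same device used for $(y,w)$ in \cite{Dursun-BB2}---is the essential observation; once that substitution is in hand the ODE integrates almost immediately.
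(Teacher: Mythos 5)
Your proposal is correct and follows essentially the same route as the paper, which proves Theorem \ref{M2pw2kind} by transferring the proof of Theorem \ref{M1pw2kind} verbatim: the same Laplacian \eqref{lapzeroM1}, the same vanishing of $C_{13},C_{14},C_{23},C_{24}$, the same homogeneous system from \eqref{MinkCVekSbtGYKos13}--\eqref{MinkCVekSbtGYKos14} forcing $(h_{11}^3)^2=(h_{12}^4)^2$, and the same factorization $(b^2-1)(b^2z^2(x')^2-x^2(z')^2)=0$. The only difference is that for $b=1$ you integrate the zero--mean--curvature ODE explicitly via $u=x+z$, $v=x-z$ to obtain \eqref{2sol1}, whereas the paper simply cites \cite{Dursun-BB2} for that step; your version is a self-contained (and correct) substitute.
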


Note that considering equation $\eqref{lapzeroM1}$, 
if the Gauss map $\nu$ of the rotational surface $M_1(b)$ and $M_2(b)$ were of the first kind 
which implies that $h_{11}^3=0$ or $h_{12}^4=0$, 
then $M_1(b)$ and $M_2(b)$ would be lying in 3--dimensional pseudo--Euclidean space.

\begin{cor}
A rotational surface in the pseudo--Euclidean space $\mathbb{E}^4_2$ defined by \eqref{1DY} or \eqref{2DY} 
with zero mean curvature has no pointwise 1--type Gauss map of the first kind. 
\end{cor}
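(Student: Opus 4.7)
The plan is to read off a pointwise identity from the already-computed Laplacian \eqref{lapzeroM1}, and then exploit it together with the explicit formulas \eqref{1DYuzeyTemelForm3}, \eqref{1DYuzeyTemelForm4} (and their counterparts \eqref{2DYuzeyTemelForm3}, \eqref{2DYuzeyTemelForm4}) for the second fundamental form. I would treat $M_1(b)$ in detail; the argument for $M_2(b)$ then follows verbatim via the substitution $(y,w)\leftrightarrow(z,x)$ in the profile curve.

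Suppose for contradiction that a non-planar rotational surface $M_1(b)$ in $\mathbb{E}^4_2$ has zero mean curvature and pointwise 1-type Gauss map of the first kind, so that $\Delta\nu=f\nu$ for some smooth function $f$. Comparing with \eqref{lapzeroM1} forces the coefficient of $e_1\wedge e_2$ to vanish, which is the pointwise identity
$$h_{11}^3\,h_{12}^4 \equiv 0 \qquad \text{on } M_1(b).$$
Hence at every point at least one of $h_{12}^4$ or $h_{11}^3$ vanishes, and (shrinking the domain if necessary) one of the two alternatives must hold on a whole connected open subset.

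In the first case, $h_{12}^4\equiv 0$ reduces \eqref{1DYuzeyTemelForm4} to the ODE $wy'-yw'=0$, so $y(s)=c_0 w(s)$ for some constant $c_0$. Substituting this relation back into \eqref{1DYuzeyTemelForm3} makes $h_{22}^3\equiv 0$ automatically, and minimality then forces $h_{11}^3=0$ as well; the formula for $h_{11}^3$ becomes $(b^2-1)c_0 ww'=0$, so once the degenerate possibilities $c_0=0$ or $w$ constant are ruled out one gets $b=1$. But with $b=1$ and $y=c_0 w$ the parametrization \eqref{1DY} rewrites as
$$r_1(s,t)=w(s)\bigl[\sinh t\,(1,0,c_0,0)+\cosh t\,(0,c_0,0,1)\bigr],$$
exhibiting $M_1(1)$ as an open piece of the 2-plane spanned by $(1,0,c_0,0)$ and $(0,c_0,0,1)$, contradicting non-planarity. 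In the second case ($h_{11}^3\equiv 0$ with $h_{12}^4\not\equiv 0$), \eqref{1DYuzeyTemelForm3} gives $b^2 yw'=wy'$ and therefore $y=Kw^{b^2}$; substituting into the formula for $h_{22}^3$ and using $h_{22}^3=0$ (again from $H=0$) yields $Kb^2(b^2-1)w^{b^2-2}(w')^3=0$, which (away from trivial branches) forces $b=1$, hence $y=Kw$, which sends us back to the first case.

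The main obstacle is not computational but bookkeeping: I need to check that the exceptional value $b=1$ really does not hide a genuinely non-planar solution, and that the two alternatives arising from $h_{11}^3 h_{12}^4=0$ can be propagated to a single alternative on the connected open set carrying the moving frame. Once this is verified, the identical analysis applied to \eqref{2DYuzeyTemelForm3}--\eqref{2DYuzeyTemelForm4} disposes of $M_2(b)$.
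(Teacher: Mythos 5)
Your proposal is correct and follows essentially the same route as the paper: the paper likewise reads off from \eqref{lapzeroM1} that a first--kind Gauss map forces $h_{11}^3h_{12}^4=0$ and then notes (in a single sentence) that either alternative degenerates the surface, whereas you carry out the resulting ODE analysis ($y=c_0w$ resp.\ $y=Kw^{b^2}$, then $b=1$ and planarity) explicitly. The only point to keep in mind is that, as with Theorems \ref{M1pw2kind} and \ref{M2pw2kind}, the corollary must implicitly assume the surface is non--planar and regular, since a plane trivially has first--kind Gauss map by Remark \ref{planeGaussmap}; your argument correctly uses exactly these hypotheses to exclude the branches $c_0=0$, $K=0$ and $w'=0$.
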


\end{document}